\newtheorem{theorem}{Theorem}[section]
\newtheorem{lemma}[theorem]{Lemma}
\newtheorem{proposition}[theorem]{Proposition}
\newtheorem{definition}[theorem]{Definition}
\theoremstyle{definition}
\begin{document}

\title[Hypergraph $p$-Laplacian equations]
{Hypergraph $p$-Laplacian equations for data interpolation and semi-supervised learning}

\author{Kehan Shi $^{\ast}$, Martin Burger $^{\dagger,\ddagger}$}
\address{$^{\ast}$Department of Mathematics, China Jiliang University,
Hangzhou 310018, China}
\address{$^{\dagger}$Computational Imaging Group and Helmholtz Imaging, Deutsches Elektronen-Synchrotron DESY, 22607 Hamburg, Germany}
\address{$^{\ddagger}$Fachbereich Mathematik, Universit\"{a}t Hamburg, 20146 Hamburg, Germany}

\email{kshi@cjlu.edu.cn}

\subjclass{35R02, 65D05}
 \keywords{Hypergraph, $p$-Laplacian, data interpolation, semi-supervised learning}


 \begin{abstract}
 Hypergraph learning with $p$-Laplacian regularization has attracted a lot of attention due to its flexibility in modeling higher-order relationships in data.
 This paper focuses on its fast numerical implementation, which is challenging due to the non-differentiability of the objective function and the non-uniqueness of the minimizer.
We derive a hypergraph $p$-Laplacian equation from the subdifferential of the $p$-Laplacian regularization.
 A simplified equation that is mathematically well-posed and computationally efficient is proposed as an alternative.
Numerical experiments verify that the simplified $p$-Laplacian equation suppresses spiky solutions in data interpolation and improves classification accuracy in semi-supervised learning.
The remarkably low computational cost enables further applications.
\end{abstract}

\maketitle

\section{Introduction}
Over the past two decades, hypergraphs have become a valuable tool for data processing.
It is defined as the generalization of a graph in which a hyperedge can connect more than two vertices.
This allows hypergraphs to model higher-order relations involving multiple vertices in data, with applications in areas including image processing \cite{zhang2020hypergraph,shi2024continuum}, bioinformatics \cite{klamt2009hypergraphs,patro2013predicting},  social networks \cite{antelmi2021social,fazeny2023hypergraph}, etc.

In this paper, we focus on semi-supervised learning on undirected hypergraphs.
Let $H=(V, E, W)$ be a hypergraph, where $V=\{x_i\}_{i=1}^n\subset \mathbb{R}^d$ denotes the vertex set, $E=\{e_k\}_{k=1}^m$ is the hyperedge set, and $W=\{w_k\}_{k=1}^m$ assigns positive weights for hyperedges.
We are given a subset of labeled vertices $\{x_i\}=:L\subset V$ and the associated labels $\{y_i\}\subset \mathbb{R}$.
The goal is to assign labels for the remaining vertices $V\backslash L$ based on the training data $\{(x_i, y_i), x_i\in L, y_i\in \mathbb{R}\}$.

We consider the standard approach that minimizes
the constraint functional
\begin{equation*}
  F^{con}(u)=
  \begin{cases}
    F(u), \quad &\mbox{if } u(x_i)=y_i,~ x_i\in L,\\
  +\infty,\quad &\mbox{otherwise.}
  \end{cases}
\end{equation*}
Here $F(u)$ is the regularization of $u: V\rightarrow\mathbb{R}$ that enforces the smoothness of $u$.
It implicitly assumes that vertices within the same hyperedge tend to have the same label.
The constraint in $F^{con}$ ensures the minimizer of it to satisfy the given training data.
One of the fundamental algorithms for hypergraph learning is the $p$-Laplacian regularization \cite{zhou2006learning}
\begin{equation}\label{eq:1.0}
  F_{CE}(u)=\sum_{k=1}^m w_k \sum_{x_i,x_j\in e_k}|u(x_i)-u(x_j)|^p,
\end{equation}
where $p>1$.
Notice that the hypergraph $H$ can be approximated by a weighted graph $G=(V,E_G,W_G)$ with clique expansion \cite{agarwal2006higher}.
More precisely, for any $x_i,x_j\in e_k\subset E$, there exists an edge $e_{i,j}\in E_G$. The associated weight $w_{i,j}=\frac{w_k}{C_{|e_k|}^2}$, where $|e_k|$ denotes the cardinality of hyperedge $e_k$
and $C^2_{|e_k|}=|e_k|(|e_k|-1)/2$.
Then
functional $F_{CE}$ is equivalent to applying the graph $p$-Laplacian \cite{el2016asymptotic}
\begin{equation}\label{eq:1.2}
  F_{G}(u)=\sum_{i,j=1}^n w_{i,j} |u(x_i)-u(x_j)|^p,
\end{equation}
to the weighted graph $G$.
It was shown that the approximation approach can not fully utilize the hypergraph structure \cite{agarwal2006higher}.
Later in \cite{hein2013total},
the authors proposed to overcome this limitation with a new hypergraph $p$-Laplacian regularization
\begin{equation}\label{eq:1.1}
  F_H(u)=\sum_{k=1}^m w_k \max_{x_i,x_j\in e_k}|u(x_i)-u(x_j)|^p,
\end{equation}
which is deduced from the Lov\'{a}sz extension of the hypergraph cut.

$F_H$ is more mathematically appealing than $F_{CE}$ due to its convexity but non-differentiability.
In \cite{ikeda2023nonlinear}, the authors defined the hypergraph $p$-Laplacian operator, which is multivalued, as the subdifferential $\partial F_H$.
Properties of solutions to nonlinear evolution equations governed by $\partial F_H$
(i.e., $\frac{du(t)}{dt}+\partial F_Hu(t)\ni 0$ and its variants)
 were studied \cite{ikeda2023nonlinear,fukao2022heat}.
The variational consistency between $F_H^{con}$ and the continuum $p$-Laplacian
\begin{equation*}
  \mathcal{F}^{con}(u)=
  \begin{cases}
    \int_{\Omega}|\nabla u|^pdx, \quad &\mbox{if } u\in W^{1,p}(\Omega) \mbox{ and } u(x_i)=y_i,~ x_i\in L,\\
  +\infty,\quad &\mbox{otherwise,}
  \end{cases}
\end{equation*}
for $p>d$ was established in our previous paper \cite{shi2024hypergraph} in the setting when the number of vertices $n$ goes to
infinity while the number of labeled vertices remains fixed.
To avoid the complicated structure of the hypergraph,
we considered the $\varepsilon_n$-ball hypergraph constructed from point cloud data by the distance-based method.
This is analogous to the graph case \cite{slepcev2019analysis}, in which the variational consistency between $F_G^{con}$ and $\mathcal{F}^{con}$ was obtained for a random geometric graph with connection radius $\varepsilon_n$.
Compared to $F_G^{con}$, the result for $F_H^{con}$ relies on a weaker assumption on the upper bound of $\varepsilon_n$.
Numerical experiments in \cite{shi2024hypergraph} show that $F_H^{con}$ suppresses spiky solutions in data interpolation better than $F_G^{con}$.

On the other hand, the non-differentiability of $F_H^{con}$ and the non-uniqueness of its minimizers cause some challenges in the numerical aspect.
Unlike the graph functional $F_{G}^{con}$, there exists no straightforward and efficient algorithm for minimizing $F_{H}^{con}$, even in the case $p=2$.
The primal-dual hybrid gradient (PDHG) algorithm \cite{chambolle2011first} was first considered in \cite{hein2013total} for $p=1,2$.
A new algorithm \cite{shi2024hypergraph} that works any $p\geq 1$ was proposed based on the stochastic PDHG algorithm \cite{chambolle2018stochastic}.
To avoid the non-uniqueness of minimizers for $F_H^{con}$, the $\ell^2$-norm constraint was used in \cite{hein2013total}.
While in \cite{zhang2017re}, the authors proposed to minimize $F_H^{con}$ by the subgradient descent method \cite{shor2012minimization} and utilized the confidence interval to ensure the uniqueness.
Nevertheless, their high computational cost cannot be neglected for large-scale datasets and hinders further applications of the hypergraph $p$-Laplacian.

The purpose of this paper is to provide an alternative to $F_H^{con}$ that can be uniquely and efficiently solved.
We begin by addressing the non-uniqueness issue of the minimizer for $F_H^{con}$ and obtain a single-valued $p$-Laplacian operator from the subdifferential of $F_H^{con}$.
The operator involves unknown parameters depending on the structure of the hypergraph, which prevents us from solving the associated $p$-Laplacian equations for semi-supervised learning.
A simplified equation that disregards these parameters is then proposed as an approximation.
It is mathematically well-posed: It admits a unique solution and satisfies the comparison principle.
There exist hypergraphs on which the solution of the simplified equation coincides with a minimizer of $F_H^{con}$.
Despite this simplification, we still refer to it as the hypergraph $p$-Laplacian equation.

Through numerical experiments on one-dimensional data interpolation, we observe that the simplified hypergraph $p$-Laplacian equation substantially inherits the characteristic of  $F_H^{con}$ that suppresses spiky solutions.
Experimental results on real-world datasets indicate that it even improves the classification accuracy for semi-supervised learning.
The most notable feature of the new equation is its low computational cost.
Compared to the aforementioned algorithms, it dramatically reduces the computation time.
Further applications of the hypergraph $p$-Laplacian for large-scale datasets become possible.

This paper is organized as follows. In section 2, we establish a hypergraph $p$-Laplacian equation from the subdifferential of $F_H^{con}$ and propose a simplified version that is computationally feasible and efficient. The properties of solutions for the equation are also discussed.
Numerical experiments are presented in section 3 to demonstrate the performance of the simplified equation for data interpolation and semi-supervised learning. We conclude this paper in section 4.

\section{Hypergraph $p$-Laplacian equations}
Let $p>1$.
Throughout this paper, we always assume that the hypergraph $H$ is connected. Namely,
for any $x_i,x_j\in V$, there exist hyperedges $e_{k_1},e_{k_1},\cdots,e_{k_l}\in E$, such that
$x_i\in e_{k_1}$, $x_j\in e_{k_l}$, and $e_{k_s}\cap e_{k_{s+1}}\neq \emptyset$ for any $s=1,\cdots,l-1$.

\subsection{The property for the minimizer of $F_H^{con}$}
Notice that $F_H^{con}$ is coercive and lower semi-continuous  \cite{shi2024hypergraph}, it admits at least one minimizer.
The non-uniqueness of minimizers can be seen from the fact that the functional depends only on the maximum and minimum values on each hyperedge.
We are more concerned with vertices whose values are uniquely determined when minimizing the functional $F_H^{con}$.

\begin{definition}
  Let $u$ be a minimizer of $F_H^{con}$. We define $D(u)\subset V$ to be a subset of vertices such that for any $x_i\in D(u)$ and any perturbation of $u$ at $x_i$, i.e.,
  \begin{align*}
    v(x_j):=
    \begin{cases}
      u(x_i)+\varepsilon, \quad &\mbox{if } j=i,\\
      u(x_j),\quad &\mbox{otherwise},
    \end{cases}
  \end{align*}
  where $\varepsilon$ is a constant with a small absolute value, $F_H^{con}(v)>F_H^{con}(u)$ holds.
\end{definition}

Clearly, $L\subset D(u)$. The following lemma that characterizes the vertex in $D(u)\backslash L$ follows from  the definition directly.
\begin{lemma}\label{le:2.1:0}
  Let $u$ be a minimizer of $F_H^{con}$ and $x_i\in D(u)\backslash L$. There exist two hyperedges $e_k, e_l\in E$ such that
  \begin{equation}\label{eq:2.1:1}
    u(x_i)=\max_{x_j\in e_k}u(x_j)\quad \mbox{and}\quad u(x_i)=\min_{x_j\in e_l}u(x_j).
  \end{equation}
\end{lemma}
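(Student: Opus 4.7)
The plan is to prove the two equalities in (\ref{eq:2.1:1}) by separate (symmetric) contradiction arguments, exploiting the fact that the hyperedge energy depends only on the maximum and minimum of $u$ on each hyperedge. The key reduction is to rewrite
\begin{equation*}
  F_H(u)=\sum_{k=1}^m w_k\bigl(M_k(u)-m_k(u)\bigr)^p,
\end{equation*}
where $M_k(u):=\max_{x_j\in e_k}u(x_j)$ and $m_k(u):=\min_{x_j\in e_k}u(x_j)$. I will focus on the first equality; the second follows by replacing $\varepsilon$ with $-\varepsilon$ and swapping the roles of $M_k$ and $m_k$.

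Suppose for contradiction that there is no hyperedge $e_k$ with $u(x_i)=M_k(u)$, i.e., $u(x_i)<M_k(u)$ for every hyperedge $e_k$ containing $x_i$. Let $v$ be the perturbation of $u$ obtained by increasing the value at $x_i$ by a small $\varepsilon>0$. Since $x_i\notin L$, the function $v$ still satisfies the constraint, so $F_H^{con}(v)=F_H(v)$. I would then classify the hyperedges $e_k\ni x_i$ according to whether $u(x_i)$ is strictly above $m_k(u)$ or equals $m_k(u)$, and in the latter case whether the minimum is attained uniquely.

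The arithmetic that I expect to carry out is: for hyperedges not containing $x_i$ the contribution is unchanged; for hyperedges with $m_k(u)<u(x_i)<M_k(u)$, both $M_k$ and $m_k$ are unaffected once $\varepsilon$ is small enough, so the contribution is unchanged; for hyperedges with $u(x_i)=m_k(u)$ where the minimum is attained by another vertex, $m_k$ stays put; finally, for hyperedges with $u(x_i)=m_k(u)$ uniquely, $m_k(v)=u(x_i)+\varepsilon$ while $M_k(v)=M_k(u)>u(x_i)$, so the contribution strictly decreases. In the first alternative (some such uniquely-minimizing $e_k$ exists) this gives $F_H(v)<F_H(u)$, contradicting minimality of $u$; in the second alternative (no such $e_k$ exists) it gives $F_H(v)=F_H(u)$, so the perturbation does not strictly increase the energy and hence $x_i\notin D(u)$, contradicting the hypothesis. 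Either way we obtain a contradiction, proving the existence of $e_k$ with $u(x_i)=M_k(u)$.

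The main obstacle I anticipate is simply the case analysis of \emph{unique} versus \emph{non-unique} extremal vertices on each hyperedge, since a non-unique minimum absorbs the perturbation without changing the energy. The crucial observation is that the definition of $D(u)$ requires \emph{strict} increase for every small perturbation, which is precisely what rules out the borderline case where the change is zero; without this strictness the lemma would fail. Apart from this, the argument is a direct perturbation computation, and the symmetric statement for the minimum is obtained by the same reasoning applied to $v=u-\varepsilon\mathbf{1}_{x_i}$.
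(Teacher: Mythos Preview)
Your argument is correct and is exactly the perturbation computation the paper has in mind when it says the lemma ``follows from the definition directly'' (the paper gives no further proof). One minor simplification: you do not actually need to split into the ``uniquely minimizing'' subcase and invoke minimality of $u$ there---in every case your analysis shows $F_H^{con}(v)\le F_H^{con}(u)$ for the upward perturbation, and this alone already contradicts the strict inequality required by $x_i\in D(u)$.
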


The maximum and minimum values of a minimizer on each hyperedge are uniquely determined by $F_H^{con}$.
\begin{lemma}\label{le:2.1:0a}
    Let $u_1$ and $u_2$ be two minimizers of $F_H^{con}$.
    For any $e_k\in E$, we have
  \begin{equation}\label{eq:2.1:3}
    \max_{x_j\in e_k}u_1(x_j)=\max_{x_j\in e_k}u_2(x_j)\quad\mbox{and}\quad
    \min_{x_j\in e_k}u_1(x_j)=\min_{x_j\in e_k}u_2(x_j).
  \end{equation}
\end{lemma}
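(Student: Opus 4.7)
The plan is to exploit convexity of the admissible set $\{u : u(x_i)=y_i \text{ for } x_i \in L\}$ together with strict convexity of $t \mapsto t^p$ for $p>1$. Since both $F_H$ and the constraint set are convex, $F_H^{con}$ is convex, so its minimizer set is convex and $u_\lambda := \lambda u_1 + (1-\lambda) u_2$ is again a minimizer for every $\lambda \in [0,1]$.

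For each hyperedge $e_k$, I would introduce the spread $s_k(u) := \max_{x_j \in e_k} u(x_j) - \min_{x_j \in e_k} u(x_j)$, which is a convex function of $u$. Using $s_k(u_\lambda) \le \lambda s_k(u_1) + (1-\lambda) s_k(u_2)$ together with convexity of $t \mapsto t^p$, one gets
\begin{equation*}
F_H(u_\lambda) \le \sum_{k=1}^m w_k \bigl(\lambda s_k(u_1) + (1-\lambda) s_k(u_2)\bigr)^p \le \lambda F_H(u_1) + (1-\lambda) F_H(u_2).
\end{equation*}
All three sides equal the minimum value of $F_H^{con}$, so both inequalities are equalities; strict convexity of $t\mapsto t^p$ for $p>1$ then forces $s_k(u_1)=s_k(u_2)$ for every $k$, giving equality of spreads across minimizers.

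To promote this to the separate equalities of maxima and minima, I would use that $\lambda \mapsto \max_{e_k} u_\lambda$ is convex piecewise linear on $[0,1]$ and $\lambda \mapsto \min_{e_k} u_\lambda$ is concave piecewise linear, with difference the constant $s_k(u_1)$. This forces both to be affine, so each equals a single linear piece, yielding a common argmax $x_k^\star \in e_k$ of $u_1$ and $u_2$ and a common argmin $y_k^\star$, along with the identity $u_1(x_k^\star)-u_2(x_k^\star)=u_1(y_k^\star)-u_2(y_k^\star)$ coming from the spread relation.

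The hard part will be upgrading the common-argmax conclusion to the numerical identity $u_1(x_k^\star)=u_2(x_k^\star)$, which is where Lemma~\ref{le:2.1:0} and the connectedness of $H$ must be invoked. The idea is that Lemma~\ref{le:2.1:0} characterizes an unlabeled vertex in $D(u)\setminus L$ as simultaneously an extremum of two different hyperedges, so common argmax/argmin vertices chain together the max and min values of overlapping hyperedges; combined with $u_1=u_2$ on $L$ and a path of overlapping hyperedges from any vertex to a labeled one, equality propagates to all relevant extremal vertices. In parallel I would pursue a cleaner, contradictory route: assuming $\max_{e_k} u_1>\max_{e_k} u_2$, construct an admissible perturbation (for example a truncation of $u_1$ toward $u_2$ at the argmax vertex, or a modification of $u_2$ by a small bump supported away from $L$) whose spread strictly decreases on $e_k$ without increasing the spread on any other hyperedge, contradicting minimality of $u_1$ or of $u_2$.
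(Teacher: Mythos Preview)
Your plan coincides with the paper's proof: form the midpoint $u=\tfrac12(u_1+u_2)$, use convexity of the spread $s_k$ together with strict convexity of $t\mapsto t^p$ to force $s_k(u_1)=s_k(u_2)$ on every hyperedge, then record the per-hyperedge shift $C_k:=\max_{e_k}u_1-\max_{e_k}u_2=\min_{e_k}u_1-\min_{e_k}u_2$ and argue $C_k=0$ from the constraint $u_1=u_2$ on $L$ (with connectedness in the background). Your piecewise-affine argument on $\lambda\mapsto\max_{e_k}u_\lambda$ and $\lambda\mapsto\min_{e_k}u_\lambda$ is a clean repackaging of what the paper extracts directly from the equality cases in its chain of inequalities (their pair $(x_{k,i},x_{k,j})$ is exactly your common argmax/argmin).

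The only place you diverge is the very last step. The paper dispatches $C_k=0$ in one sentence (``it is not difficult to see from $u_1(x_i)=u_2(x_i)=y_i$ for $x_i\in L$''), without invoking Lemma~\ref{le:2.1:0}, chaining of extremal vertices, or perturbation constructions. Your instinct that this step deserves justification is reasonable, but the machinery you propose is heavier than what the paper intends and brings its own hazards: a common argmax $x_k^\star$ is not known a priori to lie in $D(u_1)$, so Lemma~\ref{le:2.1:0} does not apply to it automatically; and a single-vertex truncation at $x_k^\star$ can increase the spread of a neighbouring hyperedge where $x_k^\star$ happens to be the minimum, so the ``spread strictly decreases on $e_k$ without increasing elsewhere'' claim needs real work. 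If you want to make the paper's one-liner rigorous, the shortest route is to stay with the shifts $C_k$ and propagate along a hyperedge path to $L$ using the common extremal vertices you have already produced, rather than passing through $D(u)$.
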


\begin{proof}
Define $u=\frac{u_1+u_2}{2}$. For any $e_k\in E$, there exist $x_{k,i}, x_{k,j}\in V$, such that
\begin{align*}
  \max_{x_i,x_j\in e_k}|u(x_i)&-u(x_j)|^p =|u(x_{k,i})-u(x_{k,j})|^p \\
  &\leq \frac{1}{2}|u_1(x_{k,i})-u_1(x_{k,j})|^p +\frac{1}{2} |u_2(x_{k,i})-u_2(x_{k,j})|^p \\
  &\leq \frac{1}{2}\max_{x_i,x_j\in e_k}|u_1(x_i)-u_1(x_j)|^p
  +\frac{1}{2}\max_{x_i,x_j\in e_k}|u_2(x_i)-u_2(x_j)|^p.
\end{align*}
If any one of the above inequalities is strict, we have
\begin{equation*}
  F_H(u)< \frac{1}{2}\left(F_H(u_1)+F_H(u_2)\right)=F_H(u_1),
\end{equation*}
which contradicts the assumption that $u_1$ is a minimizer of $F_H^{con}$.
Consequently, by the strict convexity of $|\cdot|^p$,
\begin{equation*}
  \max_{x_i,x_j\in e_k}|u_1(x_i)-u_1(x_j)|^p=\max_{x_i,x_j\in e_k}|u_2(x_i)-u_2(x_j)|^p,
\end{equation*}
for any $e_k\in E$.
This yields
\begin{equation*}
  \max_{x_i,x_j\in e_k}\left(u_1(x_i)-u_1(x_j)\right)=\max_{x_i,x_j\in e_k}\left(u_2(x_i)-u_2(x_j)\right)
\end{equation*}
and there exists a constant $C_k$, such that
  \begin{equation*}
    \max_{x_j\in e_k}u_1(x_j)=\max_{x_j\in e_k}u_2(x_j)+C_k\quad\mbox{and}\quad
    \min_{x_j\in e_k}u_1(x_j)=\min_{x_j\in e_k}u_2(x_j)+C_k.
  \end{equation*}
  It is not difficult to see from the fact $u_1(x_i)=u_2(x_i)=y_i$ for $x_i\in L$ that $C_k=0$.
  This proves \eqref{eq:2.1:3}.
\end{proof}

\begin{proposition}\label{le:2.1:1}
  If $u_1$ and $u_2$ are two minimizers of $F_H^{con}$, then $D(u_1)=D(u_2)$ and
  \begin{equation}\label{eq:2.1:2}
    u_1(x_i)=u_2(x_i),
  \end{equation}
  for any $x_i\in D:=D(u_1)=D(u_2)$.
\end{proposition}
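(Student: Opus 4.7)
The plan is to fix an arbitrary $x_i\in D(u_1)$ and prove simultaneously that $u_1(x_i)=u_2(x_i)$ and $x_i\in D(u_2)$; by the symmetric roles of $u_1$ and $u_2$, this yields $D(u_1)=D(u_2)$ together with \eqref{eq:2.1:2} at once. The case $x_i\in L$ is immediate: the constraint forces $u_1(x_i)=y_i=u_2(x_i)$, and any perturbation of $u_2$ at a labeled vertex violates the constraint, so $F_H^{con}$ jumps to $+\infty$ and $L\subset D(u_2)$.

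For $x_i\in D(u_1)\setminus L$, I would first invoke Lemma~\ref{le:2.1:0} to produce two hyperedges $e_k,e_l$ containing $x_i$ with $u_1(x_i)=\max_{x_j\in e_k}u_1(x_j)=\min_{x_j\in e_l}u_1(x_j)$. Lemma~\ref{le:2.1:0a} then transfers these extremal values to $u_2$: $\max_{x_j\in e_k}u_2(x_j)=u_1(x_i)=\min_{x_j\in e_l}u_2(x_j)$. Since $x_i$ lies in both $e_k$ and $e_l$, a sandwich argument gives $u_1(x_i)\le u_2(x_i)\le u_1(x_i)$, hence $u_2(x_i)=u_1(x_i)$.

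It remains to establish $x_i\in D(u_2)$, which I would argue by contradiction. If $x_i\notin D(u_2)$, there exists $\varepsilon\neq 0$ of arbitrarily small absolute value such that the perturbation $v$ of $u_2$ at $x_i$ satisfies $F_H^{con}(v)\le F_H^{con}(u_2)$; minimality of $u_2$ then forces $v$ to be a minimizer as well. Applying Lemma~\ref{le:2.1:0a} now to the pair $(u_1,v)$ gives $\max_{x_j\in e_k}v(x_j)=u_1(x_i)=u_2(x_i)$, and since $v(x_i)=u_2(x_i)+\varepsilon$ with $x_i\in e_k$, this forces $\varepsilon\le 0$. The symmetric argument on $e_l$ forces $\varepsilon\ge 0$, contradicting $\varepsilon\neq 0$.

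The main obstacle is the membership step $x_i\in D(u_2)$: a direct first-order perturbation analysis of $u_2$ is delicate, because hyperedges on which $u_2(x_i)$ is neither the maximum nor the minimum contribute nothing to leading order and one would have to track a case split over the status of $x_i$ in every containing hyperedge. The decisive trick is to apply Lemma~\ref{le:2.1:0a} not to the original pair $(u_1,u_2)$ but to $(u_1,v)$, so that the structural information about $u_1$ extracted by Lemma~\ref{le:2.1:0} acts directly as an extremal constraint on $v$ at $e_k$ and $e_l$ and instantly kills the perturbation.
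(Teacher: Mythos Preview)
Your proof is correct and follows essentially the same route as the paper: invoke Lemma~\ref{le:2.1:0} at $x_i$, transfer the hyperedge extrema to $u_2$ via Lemma~\ref{le:2.1:0a}, and sandwich. The paper asserts $x_i\in D(u_2)$ in one line once $u_2(x_i)=\max_{x_j\in e_k}u_2(x_j)$ is established, whereas your argument via the perturbed minimizer $v$ and a second application of Lemma~\ref{le:2.1:0a} makes that step explicit---if anything, your treatment of this point is the more careful of the two.
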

The proposition implies that $D$ is uniquely determined by $F_H^{con}$. The non-uniqueness of minimizers for $F_H^{con}$ comes from $D^c:=V\backslash D$.

\begin{proof}
  Let $x_i\in D(u_1)\backslash L$. By \eqref{eq:2.1:1}, we assume w.l.o.g. that
  $x_i\in e_k\cap e_l$ and
  \begin{equation}\label{eq:2.1:3a}
    u_1(x_i)=\max_{x_j\in e_k}u_1(x_j)=\min_{x_j\in e_l}u_1(x_j),
  \end{equation}
 for two hyperedges $e_k,e_l\in E$.
  By \eqref{eq:2.1:3}, to prove \eqref{eq:2.1:2}, we only need to show that
  \begin{equation*}
    u_2(x_i)=\max_{x_j\in e_k}u_2(x_j),
  \end{equation*}
  which also implies that $x_i\in D(u_2)$ and proves that $D(u_1)=D(u_2)$.

  If this is not true, i.e.,
  \begin{equation*}
    u_2(x_i)<\max_{x_j\in e_k}u_2(x_j),
  \end{equation*}
  it follows from \eqref{eq:2.1:3} and \eqref{eq:2.1:3a} that
  \begin{equation*}
    u_2(x_i)<\max_{x_j\in e_k}u_2(x_j)=\max_{x_j\in e_k}u_1(x_j)=u_1(x_i)=\min_{x_j\in e_l}u_1(x_j)=\min_{x_j\in e_l}u_2(x_j).
  \end{equation*}
  This contradicts the assumption that $x_i\in e_l$ and finishes the proof.
\end{proof}

\subsection{The subdifferential of $F_H^{con}$ and the hypergraph $p$-Laplacian equation}
Functional $F_H$ and $F_H^{con}$ are non-differentiable. We consider the subdifferential for them.

Let $\mathcal{H}$ be a Hilbert space with inner product $\langle\cdot,\cdot\rangle$ and induced norm $\|\cdot\|=\sqrt{\langle\cdot,\cdot\rangle}$.
For a proper, convex, and lower semi-continuous functional
$J: \mathcal{H}\rightarrow \mathbb{R}\cup\{+\infty\}$ with
 effective domain
\begin{equation*}
  \mbox{dom}(J)=\{u\in\mathcal{H}: J(u)<\infty\},
\end{equation*}
the subdifferential of $J$ at $u\in\mbox{dom}(J)$ is defined as
\begin{equation*}
  \partial J(u)=\left\{q\in\mathcal{H}: \langle q, v-u\rangle\leq J(v)-J(u), \forall v\in \mathcal{H}\right\}.
\end{equation*}
An element of $\partial J(u)$ is called a subgradient of $J$ at $u$.
The subgradient coincides with the usual gradient if $J$ is differentiable.
We shall use the following proposition of the subdifferential
\begin{equation}\label{eq:2.2:1}
  \mbox{$u$ is a minimizer of $J$ } \Longleftrightarrow  0\in \partial J(u),
\end{equation}
whose proof is trivial.

The subdifferential of $F_H$ has been obtained in \cite{ikeda2023nonlinear}.
More precisely,
\begin{align}\label{eq:2.2:3}
  \partial F_H(u)=p\left\{\sum_{k=1}^m w_k\max_{x_i,x_j\in e_k}|u(x_i)-u(x_j)|^{p-1}b_{k},~~ b_{k}\in\arg\max_{b\in B_{k}}\langle b,u\rangle\right\},
\end{align}
where
\begin{equation*}
  B_{k}=\mbox{conv}\{\mathbb{I}_{x_i}-\mathbb{I}_{x_j}: x_i,x_j\in e_k\},
\end{equation*}
$\mathbb{I}_{x_i}\in\mathbb{R}^n$ is an indicator function
\begin{align*}
  \mathbb{I}_{x_i}(x_j)=
  \begin{cases}
    1,\quad \mbox{if }x_j=x_i,\\
    0,\quad \mbox{otherwise},
  \end{cases}
\end{align*}
and $\mbox{conv}\{S\}$ denotes the convex hull of $S$ in $\mathbb{R}^n$.

The subdifferential for the constraint functional $\partial F_H^{con}$ is a corollary of the definition and \eqref{eq:2.2:3}.
For any
\begin{equation*}
  u\in \mbox{dom}(F_H^{con})=\{v\in\mathbb{R}^n: v(x_i)=y_i,  x_i\in L\},
\end{equation*}
we have
\begin{equation}\label{eq:2.2:4}
  \partial F_H^{con}(u)=\left\{q\in\mathbb{R}^n: q(x_i)=h(x_i),  x_i\in V\backslash L \mbox{ for any }  h\in \partial F_H(u)\right\}.
\end{equation}
Namely, a subgradient of $F_H^{con}(u)$ comes from a subgradient of $F_H(u)$ by taking arbitrary values at labeled vertices.

By combining the above results \eqref{eq:2.2:1}--\eqref{eq:2.2:4}, we deduce an equivalent form for the minimizer of $F_H^{con}$.
More precisely,
if $u$ is a minimizer of $F_H^{con}$,
there exist vectors $\beta_{k}\in\arg\max_{b\in B_{k}}\langle b,u\rangle$ such that
\begin{align}\label{eq:2.2:2}
  \begin{cases}
    \left(\sum_{k=1}^m w_k\max_{x_i,x_j\in e_k}|u(x_i)-u(x_j)|^{p-1}\beta_{k}\right)(x_i)=0,\quad x_i\in V\backslash L,\\
    u(x_i)=y_i,\quad  x_i\in L.
  \end{cases}
\end{align}
Conversely, if a function $u$ satisfies equation \eqref{eq:2.2:2}, where $\beta_{k}\in\arg\max_{b\in B_{k}}\langle b,u\rangle$, then it is a minimizer of $F_H^{con}$.

In the rest of this subsection, we propose a new hypergraph equation based on equation \eqref{eq:2.2:2}.
The basic idea is to consider $|\beta_{k}(x_i)|$ as a diffusion coefficient that represents the contribution of hyperedge $e_k$ to vertex $x_i$.
The proposed equation reads
\begin{align}\label{E}
  \begin{cases}
    \sum\limits_{k=1}^mw_k\alpha_{k}(x_i)
    \left|\max\limits_{x_j\in e_k\cap D}u(x_j)+\min\limits_{x_j\in e_k\cap D}u(x_j)-2u(x_i)\right|^{p-2} \\
    \qquad\qquad\quad\left(\max\limits_{x_j\in e_k\cap D}u(x_j)+\min\limits_{x_j\in e_k\cap D}u(x_j)-2u(x_i)\right)=0,\quad x_i\in D\backslash L,\\
    u(x_i)=y_i,\quad  x_i\in L,
  \end{cases}
\end{align}
where
\begin{align*}
  \alpha_{k}(x_i)=
  \begin{cases}
    |\beta_ {k}(x_i)|,\quad &\mbox{if } x_i\in e_k, \\
    0,\quad & \mbox{if } x_i\notin e_k,
  \end{cases}
\end{align*}
for $e_k\in E$ and $x_i\in D$.
Here we restrict the equation  on the subhypergraph
$\tilde{H}=(D, \tilde{E},W)$, where $\tilde{E}=\{e_k\cap D\}_{k=1}^m$, to avoid the non-uniqueness.
The notation $|0|^{p-2}0=0$ is used for the case $1<p<2$.

Owing to the following theorem, we call equation \eqref{E} the hypergraph $p$-Laplacian equation.
\begin{theorem}\label{th:existence}
  Let $u$ be a minimizer of $F_H^{con}$. Then $u|_D$
  is a solution of equation \eqref{E}.
\end{theorem}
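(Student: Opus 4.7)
My plan is to derive equation \eqref{E} directly from the subdifferential characterization \eqref{eq:2.2:2} of a minimizer. Let $u$ be a minimizer of $F_H^{con}$ and write $M_k := \max_{e_k} u$, $m_k := \min_{e_k} u$ throughout.

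\textbf{Step 1 (structure of $\beta_k$).} Since $\langle \mathbb{I}_{x_a} - \mathbb{I}_{x_b}, u\rangle = u(x_a) - u(x_b)$, any $\beta_k \in \arg\max_{b \in B_k}\langle b, u\rangle$ is a convex combination of atoms $\mathbb{I}_{x_a} - \mathbb{I}_{x_b}$ with $u(x_a) = M_k$ and $u(x_b) = m_k$. Consequently $\beta_k$ is supported in $e_k$, takes values $\geq 0$ (summing to $1$) on the argmax set $\{u = M_k\}$, values $\leq 0$ (summing to $-1$) on the argmin set $\{u = m_k\}$, and vanishes elsewhere. In particular, $\alpha_k(x_i) = |\beta_k(x_i)| > 0$ forces $u(x_i) \in \{M_k, m_k\}$, with $\alpha_k(x_i) = \beta_k(x_i)$ in the first case and $\alpha_k(x_i) = -\beta_k(x_i)$ in the second.

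\textbf{Step 2 (extrema are attained in $D$).} I claim $\max_{e_k \cap D} u = M_k$ for every $e_k$; the minimum statement is symmetric. Suppose for contradiction that $A_k := \{x \in e_k : u(x) = M_k\}$ is disjoint from $D$ (and hence from $L$). For any $x \in A_k$, upward perturbation at $x$ cannot preserve $F_H^{con}$: if it did, the resulting function would be a minimizer with maximum on $e_k$ strictly exceeding $M_k$, violating Lemma \ref{le:2.1:0a}. Since $x \notin D$, downward perturbation at $x$ must then be the $F_H^{con}$-preserving direction, yielding a new minimizer $u'$ with argmax set $A_k \setminus \{x\}$ on $e_k$. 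Proposition \ref{le:2.1:1} gives $D(u') = D$, so the new argmax set remains disjoint from $D$ and $L$, and the argument iterates. After finitely many steps the argmax set on $e_k$ becomes empty, contradicting Lemma \ref{le:2.1:0a}.

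\textbf{Step 3 (matching \eqref{E} with \eqref{eq:2.2:2}).} Fix $x_i \in D \setminus L$ and any $e_k \ni x_i$ with $\alpha_k(x_i) > 0$. Step 1 gives $u(x_i) \in \{M_k, m_k\}$, and Step 2 yields $\max_{e_k \cap D} u + \min_{e_k \cap D} u = M_k + m_k$. Thus the bracketed quantity in \eqref{E} equals $m_k - M_k$ when $u(x_i) = M_k$ and $M_k - m_k$ when $u(x_i) = m_k$; using the sign relation from Step 1, the $k$-th summand reduces in both cases to $-w_k |M_k - m_k|^{p-1}\beta_k(x_i)$. Summing over $k$ and invoking \eqref{eq:2.2:2} gives $0$. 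Hyperedges with $M_k = m_k$ contribute zero by the convention $|0|^{p-2}0 = 0$.

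\textbf{Main obstacle.} The substantive step is Step 2: one must recognize that the obstruction to upward perturbation at an argmax vertex is not the local variation of $F_H$ on $e_k$ alone (which could in principle be cancelled by other edges) but the global consequence of Lemma \ref{le:2.1:0a}, namely that any $F_H^{con}$-preserving upward move would produce a minimizer with strictly larger maximum on $e_k$, which is forbidden. Once this is observed, the iterative perturbation argument runs smoothly thanks to $D$ being shared across all minimizers (Proposition \ref{le:2.1:1}), and the remaining work is the algebraic bookkeeping of Step 3.
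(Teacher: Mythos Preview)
Your argument is correct. The algebraic core (your Step~3) coincides with the paper's computation: once one knows $\max_{e_k\cap D}u=M_k$ and $\min_{e_k\cap D}u=m_k$, the $k$-th summand of \eqref{E} reduces to $-w_k|M_k-m_k|^{p-1}\beta_k(x_i)$ and \eqref{eq:2.2:2} gives the vanishing.

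The real divergence is in how that extremum-in-$D$ fact is obtained. The paper does not argue it directly; instead it opens by \emph{redefining} $u$ on $D^c$ so that each such vertex lies strictly between the hyperedge extrema, then notes that $\beta_k$ vanishes on $D^c$ for this modified minimizer. But the paper never justifies that such a redefinition exists and remains a minimizer---which quietly presupposes that the original extrema are already attained in $D$, i.e., exactly your Step~2. Your iterative perturbation argument, leveraging Lemma~\ref{le:2.1:0a} (upward perturbation at an argmax vertex can never preserve $F_H^{con}$) together with the invariance $D(u')=D$ from Proposition~\ref{le:2.1:1}, supplies this missing piece explicitly. So your route is more self-contained; the paper's is shorter but leans on an unproved assertion.

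One small phrasing issue in Step~2: the argmax set cannot literally become empty. The contradiction materializes at the final stage, when $|A_k|=1$: since that last vertex lies outside $D$ and upward perturbation increases $F_H^{con}$, some downward perturbation must preserve it---but the resulting minimizer then has $\max_{e_k}u'<M_k$, violating Lemma~\ref{le:2.1:0a}. Stating it this way makes the endgame of the induction transparent.
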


\begin{proof}
  We redefine $u$ in $D^c$ such that for any $e_k\in E$,
  \begin{equation*}
    \min_{x_i\in e_k}u(x_j)< u(x_i)<\max_{x_i\in e_k}u(x_j),\quad x_i\in e_k\cap D^c.
  \end{equation*}
  By the assumption, $u$ and $\beta_{k}$ satisfy equation \eqref{eq:2.2:2}.
  Notice that for any $e_k\in E$ and $x_i\in D^c$,
  \begin{equation*}
    \beta_{k}(x_i)=0.
  \end{equation*}
  Namely, equation \eqref{eq:2.2:2} is trivial for $x_i\in D^c$.

  Let $x_i\in D\backslash L$ and $e_k\in E$.
  If $\beta_{k}(x_i)>0$, we have
  \begin{equation*}
    x_i\in \arg\max_{x_j\in e_k}u(x_j),
  \end{equation*}
  and
  \begin{align*}
    &-\max_{x_i,x_j\in e_k}|u(x_i)-u(x_j)|^{p-1}\beta_{k}(x_i)\\
    &=\alpha_{k}(x_i)
    \left|\max\limits_{x_j\in e_k}u(x_j)+\min\limits_{x_j\in e_k}u(x_j)-2u(x_i)\right|^{p-2}
    \left(\max\limits_{x_j\in e_k}u(x_j)+\min\limits_{x_j\in e_k}u(x_j)-2u(x_i)\right).
  \end{align*}
  The same conclusion can be obtained for the cases $\beta_{k}(x_i)<0$ and $\beta_{k}(x_i)=0$.
  Notice that
  \begin{equation*}
    \max_{x_j\in e_k}u(x_j)=\max_{x_j\in e_k\cap D}u(x_j),\quad
    \min_{x_j\in e_k}u(x_j)=\min_{x_j\in e_k\cap D}u(x_j).
  \end{equation*}
  This means that $u(x_i)$ satisfies equation \eqref{E} and finishes the proof.
\end{proof}

Conversely, if $v$ is a solution of \eqref{E}, it is not difficult to verify by reversing the proof of Theorem \ref{th:existence} that $v$ is also a minimizer of $F_{\tilde{H}}^{con}$.
Then a unique minimizer of $F_H^{con}$ can be determined, e.g.,
\begin{align*}
  u(x_i)=
  \begin{cases}
    v(x_i),\quad \mbox{if } x_i\in D,\\
    \frac{1}{2}\left(\max\limits_{1\leq j\leq l}\min\limits_{x_i\in e_{k_j}\cap D}v(x_i)
    +\min\limits_{1\leq j\leq l}\max\limits_{x_i\in e_{k_j}\cap D}v(x_i)
    \right),\quad \mbox{if } x_i\in D^c.
  \end{cases}
\end{align*}
Here we use the notation for $x_i\in D^c$ that $|x_i|=l$ (i.e., the degree of $x_i$ is $l$) and $x_i\in e_{k_1},\cdots,e_{k_l}$.

Although a minimizer of $F_H^{con}$ can be uniquely determined through equation \eqref{E}. The equation itself is not solvable numerically. Indeed, both the domain $D$ and the diffusion coefficient $\alpha_k(x_i)$ depend on the structure of the hypergraph and the training set and thus have no general expression.

\subsection{A simplified hypergraph $p$-Laplacian equation}

The purpose of this subsection is to present a simplified version of equation \eqref{E} that does not involve $D$ and $\alpha_k(x_i)$.
To this end, we consider the homogeneous coefficient $\alpha_k(x_i)\equiv 1$ and the whole domain $V$.
The new equation is as follows
\begin{align}\label{AE}
  \begin{cases}
    L^p_H u:=\sum\limits_{k=1}^m w_k\chi_k(x_i)
    \left|\max\limits_{x_j\in e_k}u(x_j)+\min\limits_{x_j\in e_k}u(x_j)-2u(x_i)\right|^{p-2}\\
    \qquad\qquad\qquad\quad
    \left(\max\limits_{x_j\in e_k}u(x_j)+\min\limits_{x_j\in e_k}u(x_j)-2u(x_i)\right)=0,\quad x_i\in V\backslash L,\\
    u(x_i)=y_i,\quad x_i\in L,
  \end{cases}
\end{align}
where
\begin{align*}
  \chi_{k}(x_i)=
  \begin{cases}
    1,\quad &\mbox{if } x_i\in e_k, \\
    0,\quad & \mbox{if } x_i\notin e_k,
  \end{cases}
\end{align*}
for $e_k\in E$ and $x_i\in V$.

 In general, a solution of equation \eqref{AE} is no longer a solution of equation \eqref{E} (when restricting to $D$) and is not a minimizer of $F_{H}^{con}$.
 Figure \ref{fig:2.2:2} shows an example of a hypergraph and a function $u$ on it that minimizes $F_H^{con}$.
 Clearly, $u$ is not a solution of equation \eqref{AE} since in this case $\alpha_k(x_i)\neq \chi_k(x_i)$.
Nevertheless, there exist specific instances where a solution of equation \eqref{AE} and a minimizer of $F_{H}^{con}$ coincide, as demonstrated in Figure \ref{fig:2.4:1}.
For this reason, we still refer to equation \eqref{AE} as the hypergraph $p$-Laplacian equation.
The theoretical study of the connection between the discrete equation \eqref{AE} and the classical $p$-Laplacian equation will be part of our future work.

\begin{figure}[H]
  \centering
\begin{tikzpicture}
  \node (x1) at (-0.5, 2) {};
  \node (x2) at (1, 2) {};
  \node (x3) at (2, 2.5) {};
  \node (x4) at (3, 2.) {};
  \node (x5) at (4, 3.5) {};
  \node (x6) at (4., 0.5) {};

  \begin{scope}[fill opacity = 0.8]
      \filldraw [fill = yellow!70] ($(x1) + (-0.5, 0)$)
      to [out = 90, in = 180] ($(x2) + (0, 0.5)$)
      to [out = 0,in = 90] ($(x2) + (0.5, 0)$)
      to [out = 270, in = 0] ($(x1) + (0.5, -0.8)$)
      to [out = 180, in = 270] ($(x1) + (-0.5, 0)$);
      \filldraw [fill = red!70] ($(x2) + (-0.5, 0)$)
      to [out = 90, in = 180] ($(x3) + (0, 0.5)$)
      to [out = 0, in = 90] ($(x4) + (0.5, 0)$)
      to [out = 270, in = 0] ($(x4) + (0, -0.5)$)
      to [out = 180, in = 270] ($(x2) + (-0.5, 0)$);
      \filldraw [fill = green!70] ($(x4) + (-0.5, 0)$)
      to [out = 90, in = 180] ($(x5) + (0, 0.5)$)
      to [out = 0,in = 90] ($(x5) + (0.5, -0.5)$)
      to [out = 270, in = 0] ($(x4) + (0.0, -0.5)$)
      to [out = 180, in = 270] ($(x4) + (-0.5, 0)$);
      \filldraw [fill = blue!70] ($(x4) + (-0.5, 0)$)
      to [out = 90, in = 90] ($(x6) + (0.5, 0)$)
      to [out = 270,in = 0] ($(x6) + (0., -0.5)$)
      to [out = 180, in = 270] ($(x4) + (-0.5, 0)$);
  \end{scope}

  \foreach \i in {1, 2, ..., 6}
  {
      \fill (x\i) circle (0.1);
  }

  \fill (x1) circle (0.1) node [below] {$x_1=4$};
  \fill (x2) circle (0.1) node [below] {$x_2$};
  \fill (x3) circle (0.1) node [above] {$x_3=0$};
  \fill (x4) circle (0.1) node [below] {$x_4$};
  \fill (x5) circle (0.1) node [below] {$x_5=3$};
  \fill (x6) circle (0.1) node [below] {$x_6=3$};

  \begin{scope}[every node/.style = {fill, shape = circle, node distance = 25pt}]
      \node (e1) [color = yellow!56, label = right:$e_1$] at (-3, 3) {};
      \node (e2) [below of = e1, color = red!56, label = right:$e_2$] {};
      \node (e3) [below of = e2, color = green!56, label = right:$e_3$] {};
      \node (e3) [below of = e3, color = blue!56, label = right:$e_4$] {};
  \end{scope}
\end{tikzpicture}
\caption{A hypergraph with 6 vertices and 4 hyperedges. Let $p=2$, $w_k=1$, $k=1,\cdots,4$, and $x_1, x_3, x_5, x_6\in L$ be labeled vertices. Then $u=(4,\frac{5}{2},0,\frac{5}{2},3,3)^T$ is a minimizer of $F_H^{con}$ and $\beta_2(x_2)=\frac{3}{5}$, $\beta_2(x_4)=\frac{2}{5}$.}
\label{fig:2.2:2}
\end{figure}

\begin{figure}[H]
  \centering
\begin{tikzpicture}
  \node (x1) at (-0.5, 2) {};
  \node (x2) at (1, 2) {};
  \node (x3) at (2, 2.5) {};
  \node (x4) at (3, 2.3) {};
  \node (x5) at (3, 1.5) {};
  \node (x6) at (4., 3) {};
  \node (x7) at (4., 1) {};

  \begin{scope}[fill opacity = 0.8]
      \filldraw [fill = yellow!70] ($(x1) + (-0.5, 0)$)
      to [out = 90, in = 180] ($(x2) + (0, 0.5)$)
      to [out = 0,in = 90] ($(x2) + (0.5, 0)$)
      to [out = 270, in = 0] ($(x1) + (0.5, -0.8)$)
      to [out = 180, in = 270] ($(x1) + (-0.5, 0)$);
      \filldraw [fill = green!70] ($(x2) + (-0.5, 0)$)
      to [out = 90, in = 180] ($(x3) + (0, 0.5)$)
      to [out = 0, in = 90] ($(x4) + (0.5, 0)$)
      to [out = 270, in = 0] ($(x5) + (0, -0.5)$)
      to [out = 180, in = 270] ($(x2) + (-0.5, 0)$);
      \filldraw [fill = red!70] ($(x4) + (-0.5, 0)$)
      to [out = 90, in = 180] ($(x6) + (0, 1)$)
      to [out = 0, in = 90] ($(x7) + (0.5, -0.5)$)
      to [out = 270, in = 0] ($(x5) + (0, -1.2)$)
      to [out = 180, in = 270] ($(x4) + (-0.5, 0)$);
  \end{scope}

  \foreach \i in {1, 2, ..., 7}
  {
      \fill (x\i) circle (0.1);
  }

  \fill (x1) circle (0.1) node [below] {$x_1=0$};
  \fill (x2) circle (0.1) node [below right] {$x_2$};
  \fill (x3) circle (0.1) node [left] {$x_3$};
  \fill (x4) circle (0.1) node [below right] {$x_4$};
  \fill (x5) circle (0.1) node [below left] {$x_5$};
  \fill (x6) circle (0.1) node [below right] {$x_6$};
  \fill (x7) circle (0.1) node [below] {$x_7=3$};

  \begin{scope}[every node/.style = {fill, shape = circle, node distance = 30pt}]
      \node (e1) [color = yellow!56, label = right:$e_1$] at (-3, 3) {};
      \node (e2) [below of = e1, color = green!56, label = right:$e_2$] {};
      \node (e3) [below of = e2, color = red!56, label = right:$e_3$] {};
  \end{scope}
\end{tikzpicture}
\caption{A hypergraph with 7 vertices and 3 hyperedges. Let $p=2$, $w_k=1$, $k=1,2,3$, and $x_1, x_7\in L$ be labeled vertices. Then $u=(0,1,\frac{3}{2},2,2,\frac{5}{2},3)^T$ is both a solution of equation \eqref{AE} and a minimizer of $F_H^{con}$.}
\label{fig:2.4:1}
\end{figure}

The comparison principle and the unique solvability of equation \eqref{AE} are stated as follows.

\begin{theorem}[Comparison principle]\label{th:2.3:2}
  If $u_1, u_2: V\rightarrow \mathbb{R}$ are two functions that satisfy
\begin{equation*}
  L^p_{H}u(x_j)=0,\quad \mbox{for } \forall x_j\in V\backslash L,
\end{equation*}
and
\begin{equation*}
  u_1(x_j)\leq u_2(x_j),\quad \mbox{for } \forall x_j\in L,
\end{equation*}
then
\begin{equation*}
  u_1(x_j)\leq u_2(x_j),\quad \mbox{for } \forall x_j\in V.
\end{equation*}
\end{theorem}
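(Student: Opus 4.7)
The plan is to argue by contradiction. Set $w = u_1 - u_2$ and suppose $M := \max_V w > 0$. Let $T = \{x \in V : w(x) = M\}$. Since $u_1 \leq u_2$ on $L$, we have $T \cap L = \emptyset$, so $\emptyset \neq T \subseteq V\setminus L$. The goal is to use the equation together with the connectivity of $H$ to force $T$ to meet $L$, which is the desired contradiction.

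First I would extract an edgewise rigidity at every $x \in T$. For each $e_k \ni x$, set
\begin{equation*}
A_k = \max_{x_j \in e_k} u_1(x_j) + \min_{x_j \in e_k} u_1(x_j) - 2 u_1(x),
\end{equation*}
and let $B_k$ be the analogous quantity for $u_2$. The bound $u_1 \leq u_2 + M$ on $V$, together with the equality $u_1(x) = u_2(x) + M$, yields $A_k \leq B_k$, and since $\phi(s) := |s|^{p-2} s$ is strictly increasing, $\phi(A_k) \leq \phi(B_k)$. Both $u_1$ and $u_2$ satisfy $L^p_H u(x) = 0$, so the $w_k \chi_k(x)$-weighted sums of the $\phi(A_k)$ and of the $\phi(B_k)$ are equal (both zero), forcing equality termwise: $A_k = B_k$. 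Since this identity is the sum of two quantities each at most $M$ equaling $2M$, both brackets must equal $M$; that is, $\max_{x_j \in e_k} u_1(x_j) - \max_{x_j \in e_k} u_2(x_j) = M$ and $\min_{x_j \in e_k} u_1(x_j) - \min_{x_j \in e_k} u_2(x_j) = M$ for every $e_k \ni x$.

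Next I would pick $y^* \in T$ minimizing $u_2$ over $T$ and propagate. Note $y^* \in V\setminus L$. For each $e_k \ni y^*$, let $z^*$ realize $\min_{x_j \in e_k} u_2(x_j)$. Then $u_1(z^*) \geq \min_{x_j \in e_k} u_1(x_j) = u_2(z^*) + M$, so $z^* \in T$; minimality of $u_2$ at $y^*$ over $T$ gives $u_2(z^*) = u_2(y^*)$, whence $\min_{x_j \in e_k} u_2(x_j) = u_2(y^*)$. Inserting this into $L^p_H u_2(y^*) = 0$ leaves a sum of nonnegative terms $w_k \phi\bigl(\max_{x_j \in e_k} u_2(x_j) - u_2(y^*)\bigr)$ equal to zero, so each vanishes and $u_2$ is constant on every $e_k \ni y^*$. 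The edgewise rigidity above then makes $u_1$ constant on each such $e_k$ as well, so $e_k \subseteq T$ and $e_k \cap L = \emptyset$. Every vertex of $e_k$ is then a new point of $T \cap (V\setminus L)$ minimizing $u_2$ over $T$, so the argument applies at it; by connectivity of $H$, iterating exhausts $V$ and forces $L \subseteq T$, contradicting $T \cap L = \emptyset$.

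The main obstacle I anticipate is the choice of $y^*$. Selecting it to minimize $u_2$ over $T$ is precisely what makes $\min_{x_j \in e_k} u_2(x_j) = u_2(y^*)$ on every incident edge, which flips each argument of $\phi$ in $L^p_H u_2(y^*)$ to a nonnegative quantity and allows the sum-of-nonnegatives-equals-zero argument to collapse the equation to local constancy. Without this choice the equation only balances positive and negative contributions, yielding no handle on propagating through individual hyperedges.
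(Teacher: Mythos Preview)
Your proof is correct and follows a genuinely different route from the paper's. Both arguments begin by assuming $M=\max_V(u_1-u_2)>0$ and, at a maximizing vertex, derive the termwise inequality $A_k\le B_k$ for every incident hyperedge. The paper then tries to finish in one stroke: it selects a hyperedge $e_{k_1}$ containing both a maximizer $x_i$ and some $x_j$ with $u_1(x_j)-u_2(x_j)<M$ (such an edge exists by connectivity) and claims the inequality is strict on $e_{k_1}$, yielding $L^p_H u_2(x_i)>L^p_H u_1(x_i)$. You instead use the two vanishing sums to force $A_k=B_k$ for \emph{every} incident $e_k$, extract the edgewise rigidity $\max_{e_k}u_1-\max_{e_k}u_2=\min_{e_k}u_1-\min_{e_k}u_2=M$, and then pick $y^*$ minimizing $u_2$ over $T$ so that $L^p_H u_2(y^*)=0$ becomes a sum of nonnegative terms, forcing local constancy of both $u_1$ and $u_2$ on every incident hyperedge and allowing propagation through the connected hypergraph.

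Your longer route actually buys something: the paper's strictness step rests on the displayed inequality $\min_{e_{k_1}}u_1-\min_{e_{k_1}}u_2\le\min_{e_{k_1}}(u_1-u_2)$, which goes the wrong way (only $\ge$ holds in general), and one can build hyperedges containing both a maximizer and a non-maximizer of $w$ on which $A_{k_1}=B_{k_1}$ nonetheless (e.g.\ three vertices with $(u_1,u_2)=(2,0),(3,1.5),(4,2)$ and $M=2$). Your argument never needs a single strict edge; the additional device of minimizing $u_2$ over $T$ is exactly what converts the balanced equation into a one-sided sum and makes the propagation go through.
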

\begin{proof}
  Assume to the contrary. We claim that there exist a hyperedge $e_{k_1}$ and vertices $x_i, x_j\in e_{k_1}$ such that
  \begin{equation*}
    u_1(x_i)-u_2(x_i)=\max_{x\in V}\left(u_1(x)-u_2(x)\right)=:c>0,
  \end{equation*}
  and
  \begin{equation}\label{eq:2.3:2}
    u_1(x_j)- u_2(x_j)< c.
  \end{equation}
  Otherwise, by the connectivity assumption of the hypergraph $H$, $u_1-u_2=c$ on every hyperedge,
  which is a contradiction.

  Assume w.l.o.g. that $|x_i|=l$ and $x_i\in e_{k_1},\cdots, e_{k_l}$.
  Then we have
  \begin{equation*}
    u_1(x_i)-u_2(x_i)\geq u_1(x)-u_2(x), \quad \forall x\in e_{k_1},\cdots, e_{k_l}.
  \end{equation*}
  Equivalently,
  \begin{equation*}
    u_2(x)-u_2(x_i)\geq u_1(x)-u_1(x_i), \quad \forall x\in e_{k_1},\cdots, e_{k_l}.
  \end{equation*}
  Taking the maximum and the minimum for the above inequality respectively and combining the results lead to
  \begin{equation}\label{eq:th:2.6:1}
    \max\limits_{x\in e}u_2(x)+\min\limits_{x\in e}u_2(x)-2u_2(x_i)
    \geq
    \max\limits_{x\in e}u_1(x)+\min\limits_{x\in e}u_1(x)-2u_1(x_i),
  \end{equation}
  for any $e=e_{k_1},\cdots, e_{k_l}$.
  It follows from \eqref{eq:2.3:2} that the above inequality is strict for hyperedge $e=e_{k_1}$.
  In fact, by
  \begin{align*}
    \min\limits_{x\in e_{k_1}}u_1(x)-\min\limits_{x\in e_{k_1}}u_2(x)
    \leq \min\limits_{x\in e_{k_1}}(u_1(x)-u_2(x))\leq u_1(x_j)-u_2(x_j)<c\\
    =u_1(x_i)-u_2(x_i),
  \end{align*}
  we have
  \begin{equation*}
    \min\limits_{x\in e_{k_1}}u_2(x)-u_2(x_i)>\min\limits_{x\in e_{k_1}}u_1(x)-u_1(x_i),
  \end{equation*}
  and consequently,
  \begin{equation*}
    \max\limits_{x\in e_{k_1}}u_2(x)+\min\limits_{x\in e_{k_1}}u_2(x)-2u_2(x_i)>\max\limits_{x\in e_{k_1}}u_1(x)+\min\limits_{x\in e_{k_1}}u_1(x)-2u_1(x_i).
  \end{equation*}
  This together with \eqref{eq:th:2.6:1} and the monotonicity of $|s|^{p-2}s$ imply that
  \begin{equation*}
    L^p_{H}u_2(x_i)> L^p_{H}u_1(x_i),
  \end{equation*}
  which is a contradiction to the assumption.
\end{proof}

\begin{theorem}
  Equation \eqref{AE} admits a unique solution $u$ that satisfies the estimate
  \begin{equation*}
    \min{y_i}\leq u(x_i)\leq\max{y_i}, \quad x_i\in  V.
  \end{equation*}
\end{theorem}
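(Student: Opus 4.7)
I would split the claim into three pieces: the asserted bounds, uniqueness, and existence. The first two fall out of the comparison principle (Theorem \ref{th:2.3:2}) almost for free, while existence is the substantive step.

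\textbf{Bounds and uniqueness.} A constant function $v\equiv c$ satisfies $L^p_H v\equiv 0$ on $V$, since every max--min bracket vanishes and the convention $|0|^{p-2}0=0$ applies. Letting $c_-=\min_i y_i$ and $c_+=\max_i y_i$ and invoking Theorem~\ref{th:2.3:2} with the constants $v\equiv c_-$ and $v\equiv c_+$ on one side of any solution $u$ of \eqref{AE} (whose boundary data $y_i$ lie in $[c_-,c_+]$) gives
\[
c_-\le u(x_i)\le c_+\qquad\text{for all } x_i\in V.
\]
Applying Theorem~\ref{th:2.3:2} twice to two solutions $u_1,u_2$, which coincide with $y_i$ on $L$, yields $u_1\le u_2$ and $u_2\le u_1$, hence $u_1=u_2$.

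\textbf{Existence via Brouwer.} Set
\[
C=\{u\in\mathbb{R}^n: u(x_i)=y_i\text{ for }x_i\in L,\; u(x_i)\in[c_-,c_+]\text{ for }x_i\in V\setminus L\},
\]
a compact convex subset of $\mathbb{R}^n$. Given $u\in C$, write $M_k(u)=\max_{x_j\in e_k}u(x_j)$ and $m_k(u)=\min_{x_j\in e_k}u(x_j)$. Define $T:C\to\mathbb{R}^n$ by $T(u)(x_i)=y_i$ on $L$ and, for $x_i\in V\setminus L$, by the unique $v=T(u)(x_i)$ that solves
\[
\Phi_i(v;u):=\sum_{k=1}^m w_k\chi_k(x_i)\bigl|M_k(u)+m_k(u)-2v\bigr|^{p-2}\bigl(M_k(u)+m_k(u)-2v\bigr)=0.
\]
The verifications I would carry out are the following. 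First, that $v$ is well defined: the map $v\mapsto\Phi_i(v;u)$ is continuous, strictly decreasing (since $s\mapsto|s|^{p-2}s$ is strictly increasing for $p>1$) and surjective onto $\mathbb{R}$, and the connectedness assumption on $H$ guarantees that the sum is non-empty so this monotonicity is strict. Second, that $T(C)\subset C$: each midpoint $(M_k(u)+m_k(u))/2$ lies in $[c_-,c_+]$, so if $v$ were strictly outside $[c_-,c_+]$ all non-zero summands of $\Phi_i(v;u)$ would share the same sign, contradicting $\Phi_i(v;u)=0$. Third, that $T$ is continuous on $C$: $M_k$ and $m_k$ are Lipschitz in $u$ and $\Phi_i$ is jointly continuous in $(v,u)$ and strictly monotone in $v$, so the implicit root $v=T(u)(x_i)$ depends continuously on $u$. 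Brouwer's fixed-point theorem then produces $u^\ast\in C$ with $T(u^\ast)=u^\ast$, which by construction is a solution of \eqref{AE} with the asserted bounds.

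\textbf{Main obstacle.} The most delicate point is the continuity of $T$, because $M_k$ and $m_k$ are merely Lipschitz and non-smooth, so the classical implicit function theorem does not directly apply. Joint continuity of $\Phi_i$ together with its strict monotonicity in $v$ is nevertheless enough to conclude continuity of the implicit root via a direct intermediate-value argument on arbitrarily small intervals around $v$; this is the step I would write out most carefully. Once this is in hand, Brouwer closes existence and the comparison principle closes uniqueness and the bounds.
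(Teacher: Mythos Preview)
Your proposal is correct and follows essentially the same route as the paper: uniqueness from the comparison principle, and existence via Brouwer's fixed-point theorem applied to exactly the same auxiliary map $T$ (freeze $u$ in the max/min and solve the resulting scalar monotone equation in each coordinate). The paper obtains the bounds as a by-product of the fixed point lying in the set $X=C$, whereas you get them directly from the comparison principle against constants; both are fine, and your treatment of the continuity of $T$ is in fact more explicit than the paper's one-line assertion.
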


\begin{proof}
  The uniqueness of solutions is a corollary of the comparison principle. We prove the existence of a solution in the following by the Brouwer fixed-point theorem.
  It can also be proven by Perron's method.

  Let
  \begin{equation*}
    X=\left\{u\in\mathbb{R}^n: u(x_i)=y_i, x_i\in L \mbox{ and } \min{y_i}\leq u(x_i)\leq\max{y_i}, x_i\in  V\right\}
  \end{equation*}
  be a closed and convex subset of $\mathbb{R}^n$.
  For a $u\in X$,
  we consider the auxiliary equation
  \begin{align}\label{eq:th:2.5:1}
    \begin{cases}
      \sum\limits_{k=1}^m w_k\chi_k(x_i)
      \left|\max\limits_{x_j\in e_k}u(x_j)+\min\limits_{x_j\in e_k}u(x_j)-2v(x_i)\right|^{p-2}\\
      \qquad\qquad\qquad\quad
      \left(\max\limits_{x_j\in e_k}u(x_j)+\min\limits_{x_j\in e_k}u(x_j)-2v(x_i)\right)=0,\quad x_i\in V\backslash L,\\
      v(x_i)=y_i,\quad x_i\in L.
    \end{cases}
  \end{align}
  Recall that under the notation $|0|^{p-2}0=0$ for  $1<p<2$, $|s|^{p-2}s$ is continuous and monotone on $\mathbb{R}$ for $p>1$.
  Consequently, for any $x_i\in V\backslash L$, the left-hand side of equation \eqref{eq:th:2.5:1} is continuous and monotone with respective to $v(x_i)$. Then a zero point exists and equation \eqref{eq:th:2.5:1} admits a unique solution $v: V\rightarrow \mathbb{R}$.

  By rewriting equation \eqref{eq:th:2.5:1} as
  \begin{align*}
    v(x_i)=
    \begin{cases}
      \frac{1}{ \sum\limits_{k=1}^m w_k\chi_k(x_i)   g_{k}(u,v(x_i))}
      \sum\limits_{k=1}^m w_k\chi_k(x_i)   g_{k}(u,v(x_i))\left(\frac{\max\limits_{x_j\in e_k}u(x_j)+\min\limits_{x_j\in e_k}u(x_j)}{2}\right),\\
      \qquad\qquad\qquad\qquad\qquad\qquad\qquad\quad \mbox{if } x_i\in V\backslash L,~ \sum\limits_{k=1}^m g_{k}(u,v(x_i))\neq 0,\\
      \sum\limits_{k=1}^m \chi_k(x_i)\frac{\max\limits_{x_j\in e_k}u(x_j)+\min\limits_{x_j\in e_k}u(x_j)}{2},\quad \mbox{if } x_i\in V\backslash L,~
      \sum\limits_{k=1}^m g_{k}(u,v(x_i))= 0, \\
      y_i,\quad x_i\in L,
    \end{cases}
  \end{align*}
  where
  \begin{align*}
    g_{k}(u,v(x_i))=
    \begin{cases}
      0,\quad \mbox{if } \max\limits_{x_j\in e_k}u(x_j)+\min\limits_{x_j\in e_k}u(x_j)-2v(x_i)=0,\\
      \left|\max\limits_{x_j\in e_k}u(x_j)+\min\limits_{x_j\in e_k}u(x_j)-2v(x_i)\right|^{p-2},\quad \mbox{otherwise},
    \end{cases}
  \end{align*}
  we further have $v\in X$.

Now we define a mapping $T: X\rightarrow X$ by $T(u)=v$.
 It is continuous and admits a fixed point $u$, which is also a solution of equation \eqref{AE}.
\end{proof}

The superiority of equation \eqref{AE} over equation \eqref{E} and the functional $F_{H}^{con}$ lies in the computational efficiency. It can  be solved with fixed-point iteration
\begin{align*}
  \begin{split}
  u^{k+1}(x_i)=u^{k}(x_i)+{\tau}
  \left(\sum\limits_{k=1}^m w_k\chi_k(x_i)
  \left|\max\limits_{x_j\in e_k}u^k(x_j)+\min\limits_{x_j\in e_k}u^k(x_j)-2u^k(x_i)\right|^{p-2}\right. \\
  \left.\left(\max\limits_{x_j\in e_k}u^k(x_j)+\min\limits_{x_j\in e_k}u^k(x_j)-2u^k(x_i)\right)
  \right),
  \end{split}
\end{align*}
for $x_i\in V\backslash L$, where $u^0$ is any initial guess of the solution and $\tau$ is the step size.
The Dirichlet boundary condition $u(x_i)=y_i$, $x_i\in L$ is posed at each step.
In the case $p=2$, we can further drop the step size by iterating
\begin{align}\label{eq:2.4:2}
\begin{split}
u^{k+1}&(x_i)=\\
&\begin{cases}
  \frac{1}{2\sum\limits_{k=1}^m w_k\chi_k(x_i)}
  \sum\limits_{k=1}^m w_k\chi_k(x_i)
  \left(\max\limits_{x_j\in e_k}u^k(x_j)+\min\limits_{x_j\in e_k}u^k(x_j)\right), & \mbox{if } x_i\in V\backslash L, \\
  y_i, & \mbox{if } x_i\in L.
\end{cases}
\end{split}
\end{align}
If
\begin{align*}
  u_0=
  \begin{cases}
    \min y_i, & \mbox{if } x_i\in V\backslash L, \\
    y_i, & \mbox{if } x_i\in L,
  \end{cases}
  \quad \left(\mbox{or }
  u_0=
  \begin{cases}
    \max y_i, & \mbox{if } x_i\in V\backslash L, \\
    y_i, & \mbox{if } x_i\in L,
  \end{cases}
  \right)
\end{align*}
the scheme is bounded and monotone.
Namely,
$\min y_i\leq u^k(x_i)\leq \max y_i$
and
$u^{k+1}(x_i)\geq u^k(x_i)$ (or $u^{k+1}(x_i)\leq u^k(x_i)$)
for any $k\geq 0$ and $x_i\in V$.
The convergence of \eqref{eq:2.4:2} follows.

\section{Numerical experiments}

In this section, we discuss the numerical performance of the proposed simplified hypergraph $p$-Laplacian equation \eqref{AE} for data interpolation and semi-supervised learning.
We focus on the case $p=2$, which is commonly used in practice.
All experiments are performed using MATLAB on a desktop equipped with an Intel Core i7 3.20 GHz CPU.

\subsection{Data interpolation in 1D}
Let $n=1280$ and $V=\{x_i\}_{i=1}^n$ be random numbers on the interval $[0,1]$ that follow the standard uniform distribution.
We assume that $6$ of the points are labeled (denoted by red circles in Figure \ref{fig:1Dk}). The goal is to interpolate the remaining points.

A ${k_n}$-nearest neighbor graph $G$ with vertex set $V$ can be constructed.
For any vertices $x_i,x_j\in V$, we connect them by an edge $e_{i,j}$ if $x_j$ is among the ${k_n}$-nearest neighbors of $x_i$, denoted by $x_j\stackrel{k_n}{\sim} x_i$.
We also connect them if $x_i\stackrel{k_n}{\sim} x_j$ for the sake of symmetry.
The constant weight $w_{i,j}=1$ is adopted for edge $e_{i,j}$.
Alternatively, we can construct a ${k_n}$-nearest neighbor hypergraph $H$ with the vertex set $V$.
For every vertex $x_i\in V$, we define a hyperedge
\begin{equation*}
  e_i=\{x_j\in V: x_j\stackrel{k_n}{\sim} x_i \}.
\end{equation*}
Weight $w_i=1$ is assigned to every hyperedge $e_i$, $i=1,\cdots,n$.

The interpolation problem becomes the semi-supervised learning on $G$ or $H$, which can be solved by equation \eqref{AE} (i.e., iteration scheme \eqref{eq:2.4:2}).
We also consider the graph $p$-Laplacian $F_{G}^{con}$, the hypergraph $p$-Laplacian $F_{CE}^{con}$ and $F_{H}^{con}$ for comparison, see \eqref{eq:1.0}--\eqref{eq:1.1} for their definitions.
$F_{G}^{con}$ is solved  by the algorithm of \cite{flores2022analysis}.
We utilize a fixed-point scheme for $F_{CE}^{con}$, which is as follows
\begin{align}\label{eq:2.5:1}
\begin{split}
u^{k+1}(x_i)=
\begin{cases}
  \frac{1}{\sum\limits_{l=1}^m w_l\chi_l(x_i)|e_l|}
  \sum\limits_{l=1}^m w_l\chi_l(x_i)
  \sum_{x_j\in e_l}u^k(x_j), & \mbox{if } x_i\in V\backslash L, \\
  y_i, & \mbox{if } x_i\in L.
\end{cases}
\end{split}
\end{align}
$F_{H}^{con}$ is solved by the stochastic PDHG \cite{shi2024hypergraph}.

To compare the interpolation result and the computation time of different algorithms, we run four algorithms for a sufficiently long time to obtain the ``true solutions" $u^*$ (shown in Figure \ref{fig:1Dk}).
The running time of hypergraph models with respect to the relative $\ell^2$ error
\begin{equation*}
  \frac{\|u-u^*\|_{\ell^2}}{\|u^*\|_{\ell^2}}
\end{equation*}
is then plotted in Figure \ref{fig:time_1D}.

As illustrated in Figure \ref{fig:1Dk}, all four algorithms effectively interpolate the data when ${k_n} = 9$.
However, as ${k_n}$ increases, notable differences emerge.
Since the solution at each unlabeled point is the average of its neighbors,
$F_{G}^{con}$ develops spikes at the labeled points for large $k_n$.
The phenomenon can also be explained from the fact that the variational consistency between $F_{G}^{con}$ and the continuum $p$-Laplacian $\mathcal{F}^{con}$ holds only when the connection radius is sufficiently small.
$F_{CE}^{con}$ exhibits similar spiking behavior as it is essentially the graph Laplacian.
In contrast, $F_{H}^{con}$ effectively suppresses spiky solutions.
 As an approximation of $F_{H}^{con}$,  equation \eqref{AE} produces solutions with a similar structure.
The difference between the two is that $F_{H}^{con}$ gives better interpolation results near the labeled points (see the 3rd and 4th labeled points), while equation \eqref{AE} provides smoother results (see the case ${k_n}=72$).

Figure \ref{fig:time_1D} shows the computational cost of different algorithms. Equation \eqref{AE} outperforms $F_{H}^{con}$ by a large margin and is even better than $F_{CE}^{con}$.
Notably, its running time decreases as the parameter ${k_n}$ increases.
This is due to the fact that a larger ${k_n}$ increases the cardinality of vertices, which in turn accelerates the convergence of equation \eqref{AE}.
The running time for large ${k_n}=72$ is comparable to that of the graph model $F_{G}^{con}$ with a recent algorithm \cite{flores2022analysis} ($\approx0.17s$).

\begin{figure}[tbp]
\centering
\begin{minipage}[t]{0.24\linewidth}
\centering
\includegraphics[width=1\textwidth]{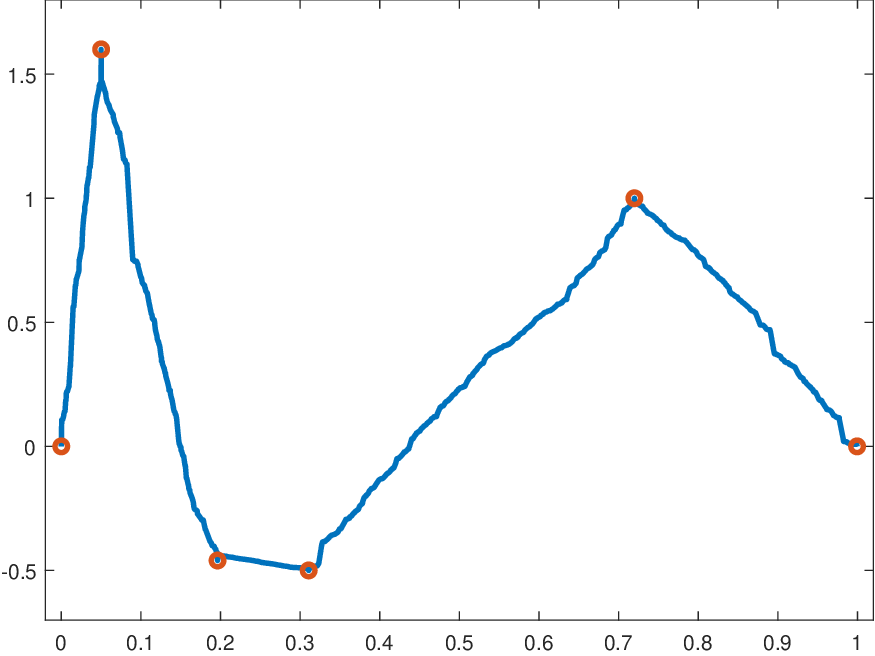}\\
\includegraphics[width=1\textwidth]{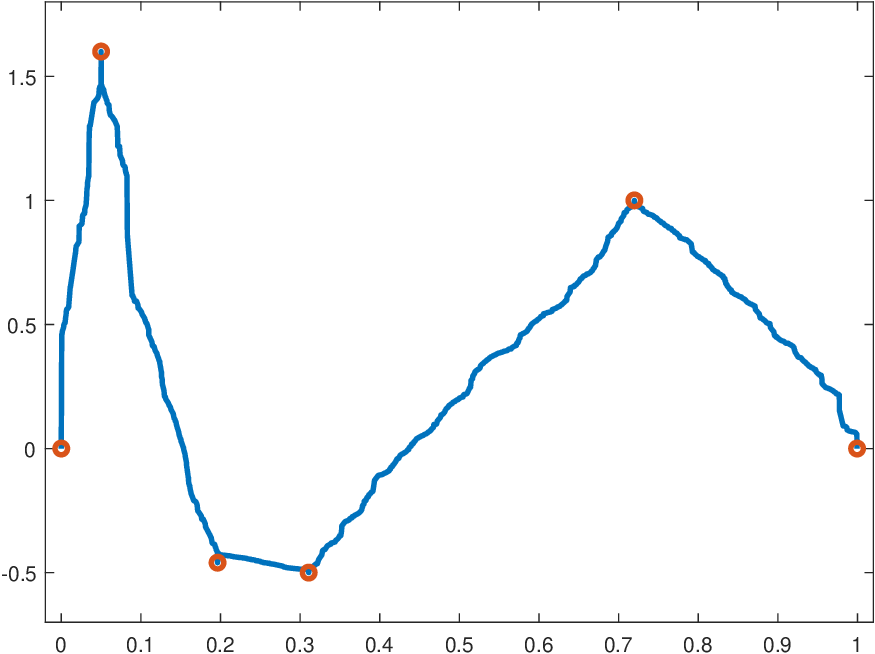} \\
\includegraphics[width=1\textwidth]{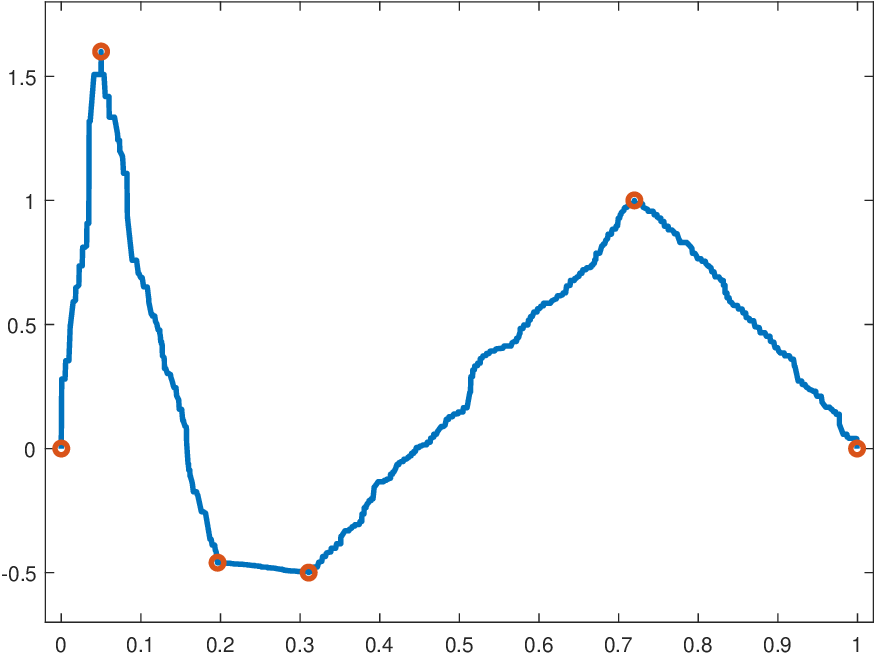}\\
\includegraphics[width=1\textwidth]{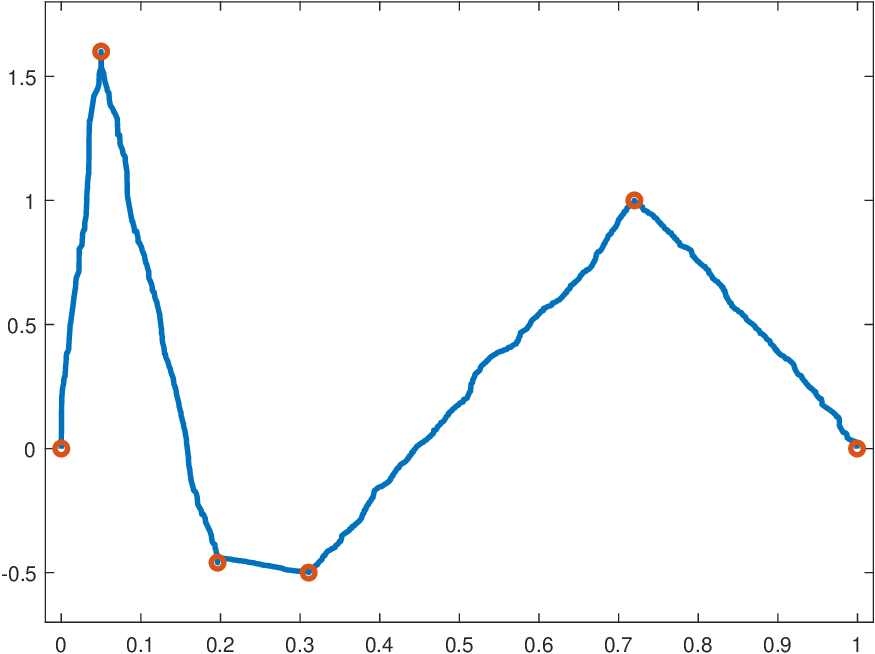}\\
\centerline{\footnotesize\emph{${k_n}=9$}}
\end{minipage}
\centering
\begin{minipage}[t]{0.24\linewidth}
\centering
\includegraphics[width=1\textwidth]{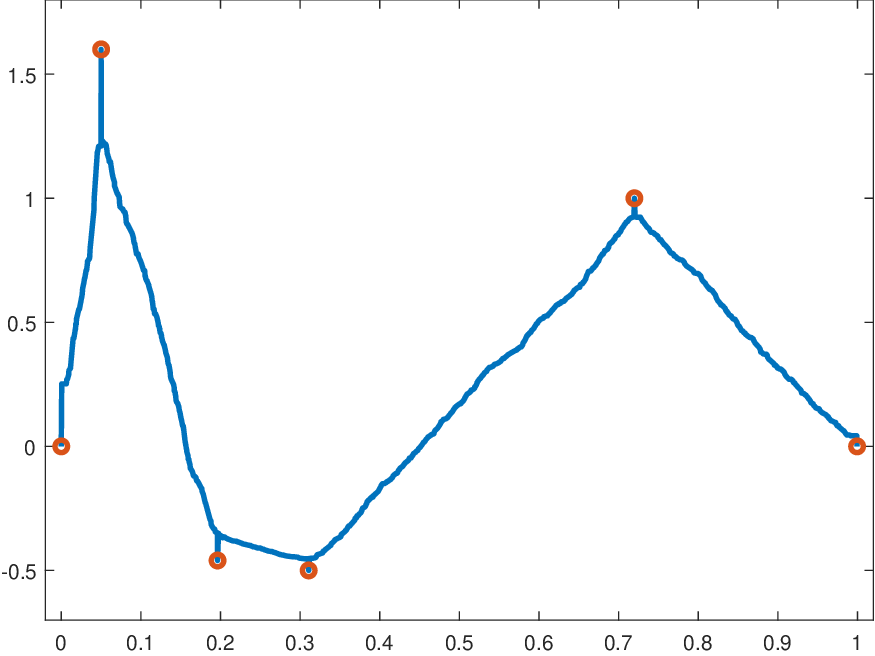}\\
\includegraphics[width=1\textwidth]{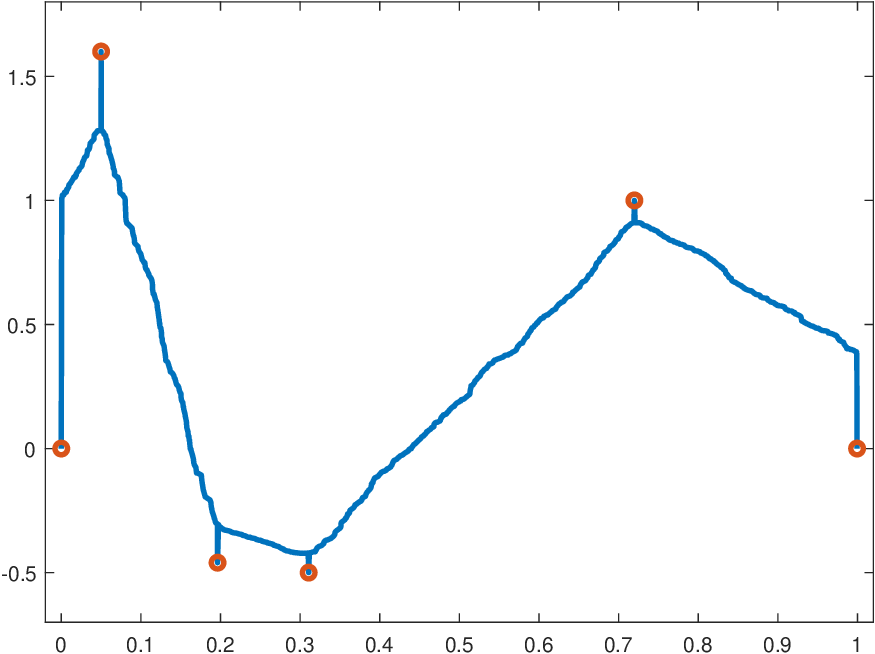} \\
\includegraphics[width=1\textwidth]{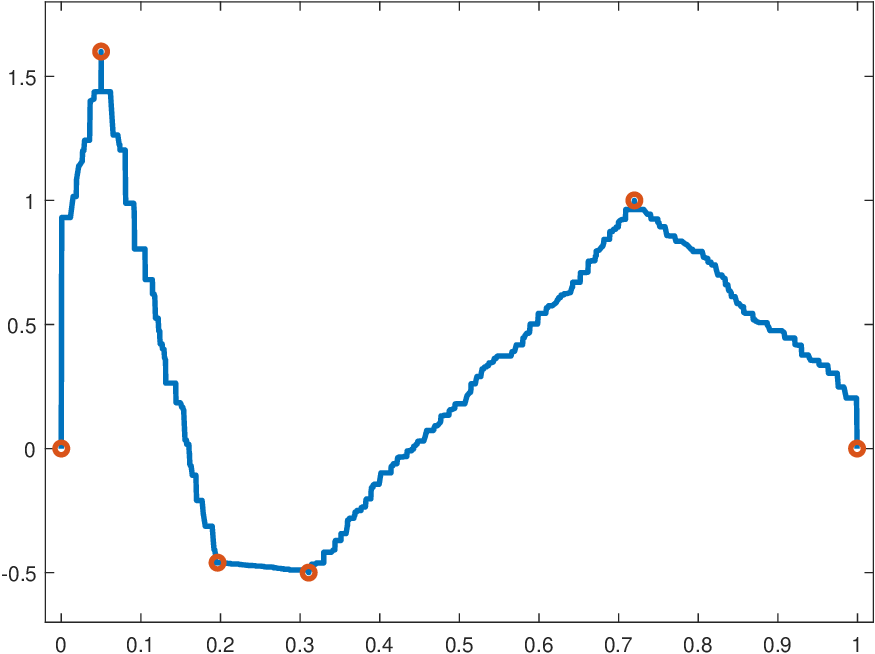}\\
\includegraphics[width=1\textwidth]{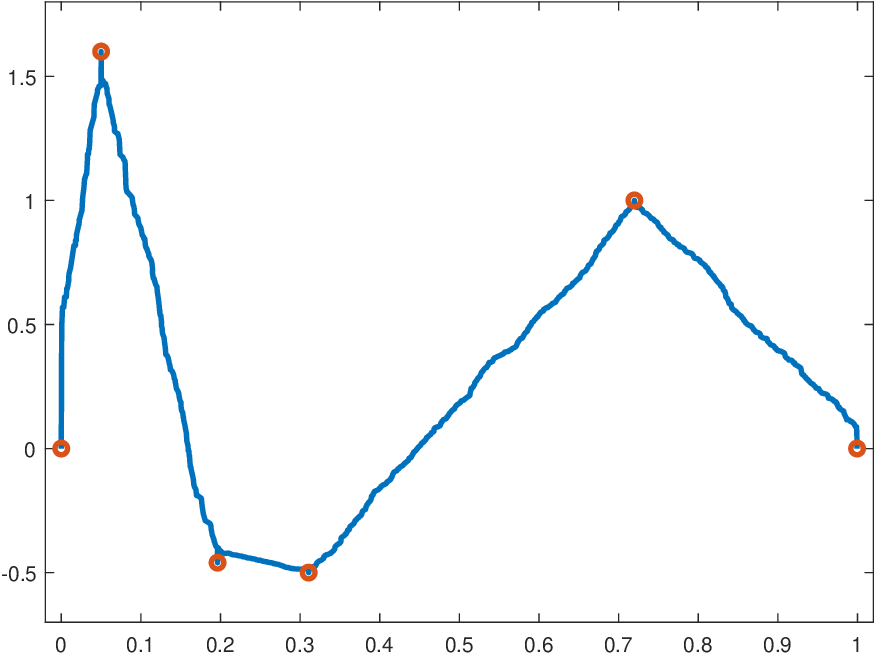}\\
\centerline{\footnotesize\emph{${k_n}=18$}}
\end{minipage}
\centering
\begin{minipage}[t]{0.24\linewidth}
\centering
\includegraphics[width=1\textwidth]{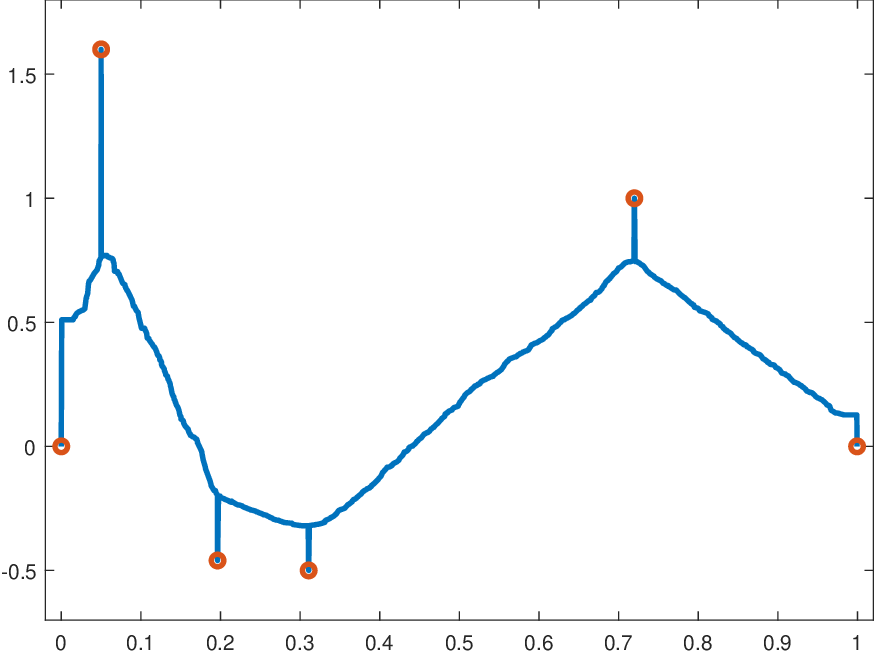}\\
\includegraphics[width=1\textwidth]{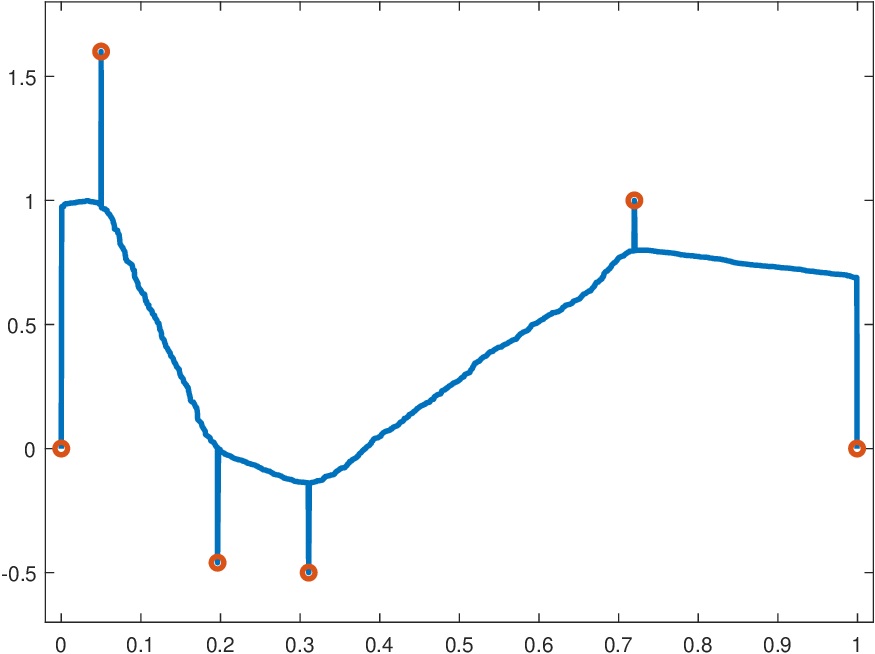} \\
\includegraphics[width=1\textwidth]{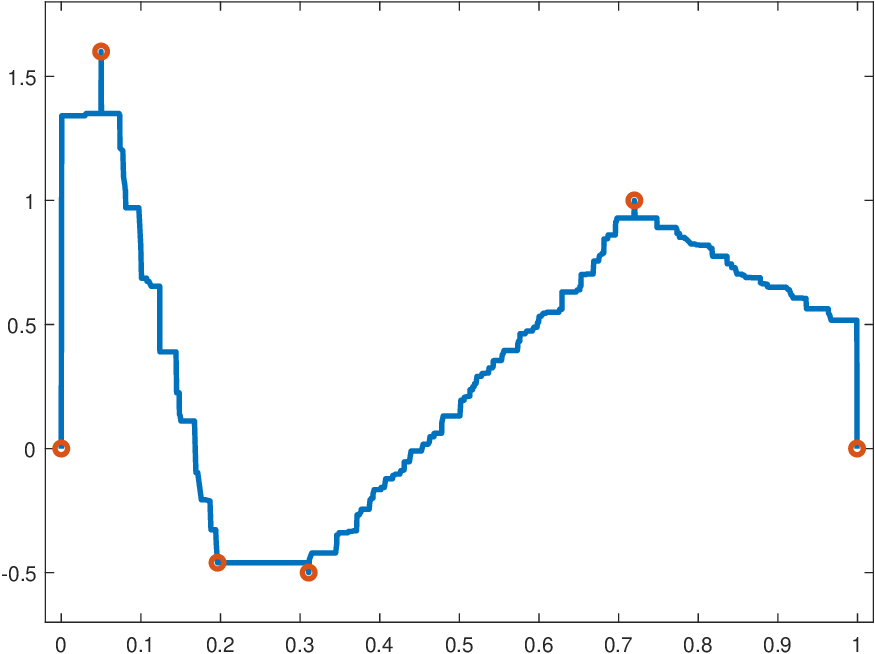}\\
\includegraphics[width=1\textwidth]{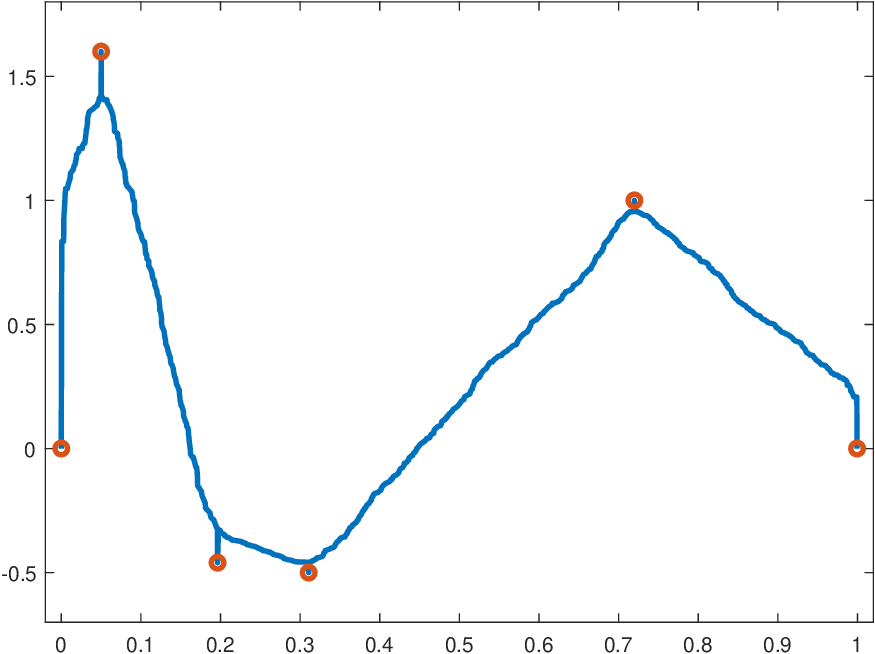}\\
\centerline{\footnotesize\emph{${k_n}=36$}}
\end{minipage}
\centering
\begin{minipage}[t]{0.24\linewidth}
\centering
\includegraphics[width=1\textwidth]{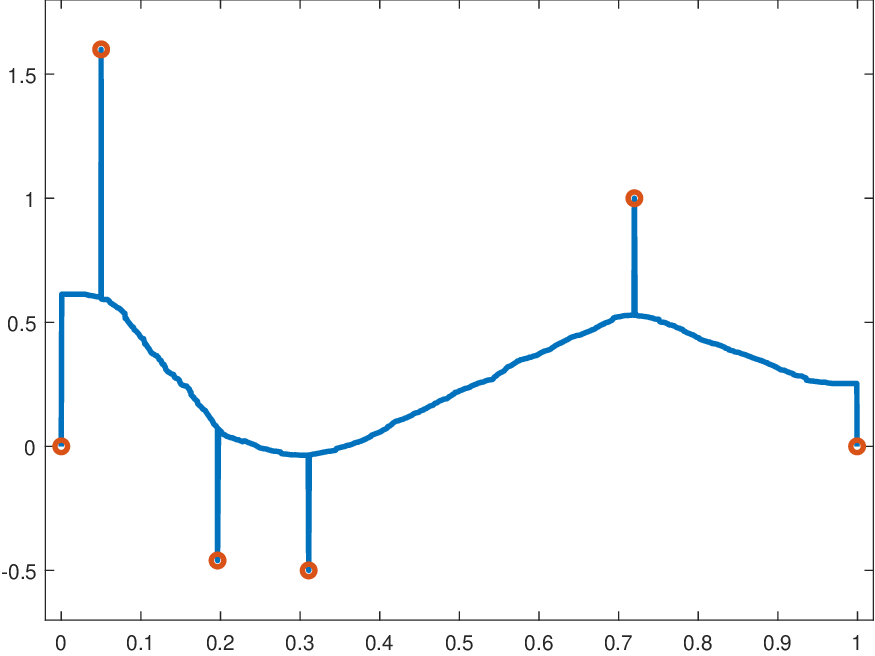}\\
\includegraphics[width=1\textwidth]{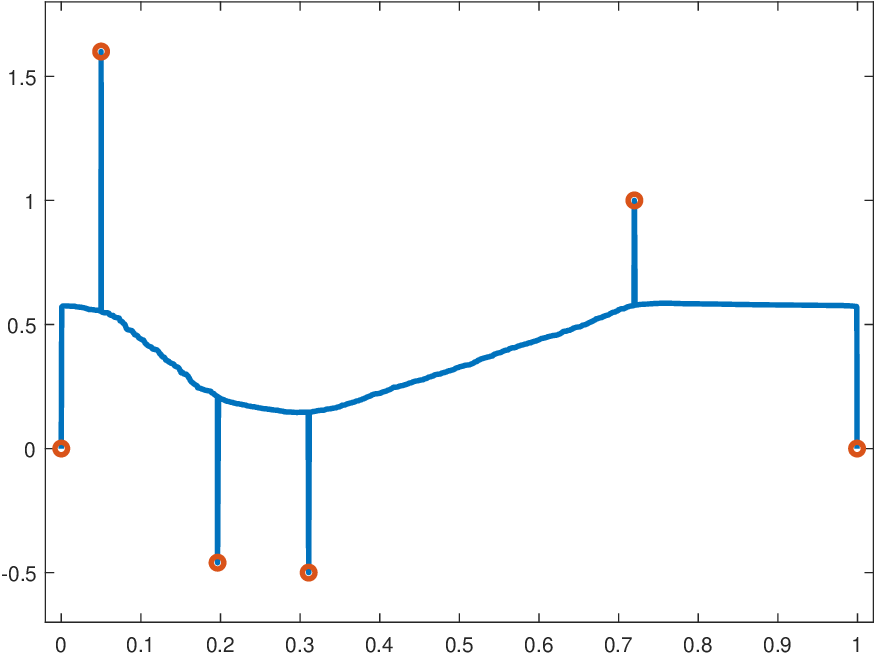} \\
\includegraphics[width=1\textwidth]{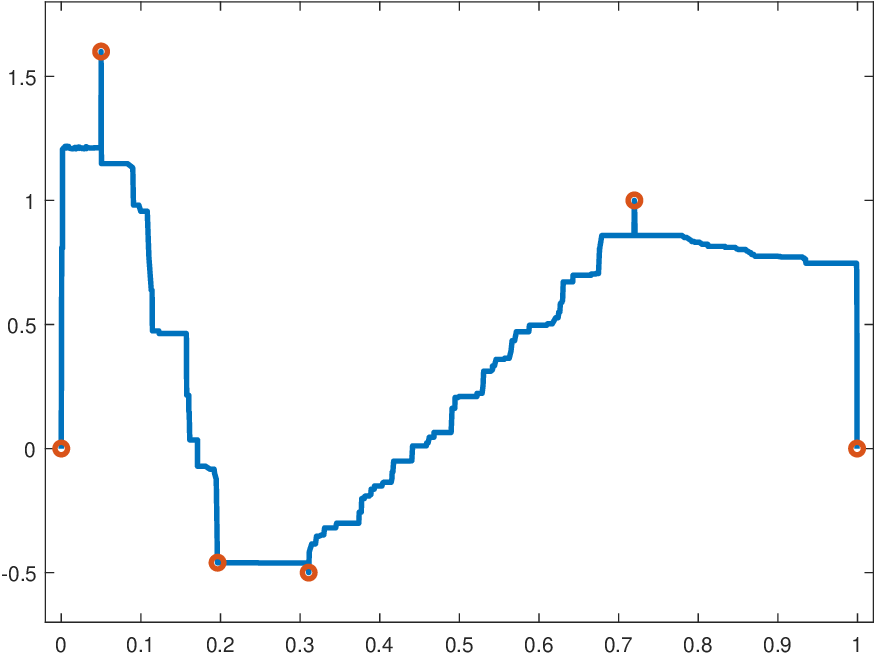}\\
\includegraphics[width=1\textwidth]{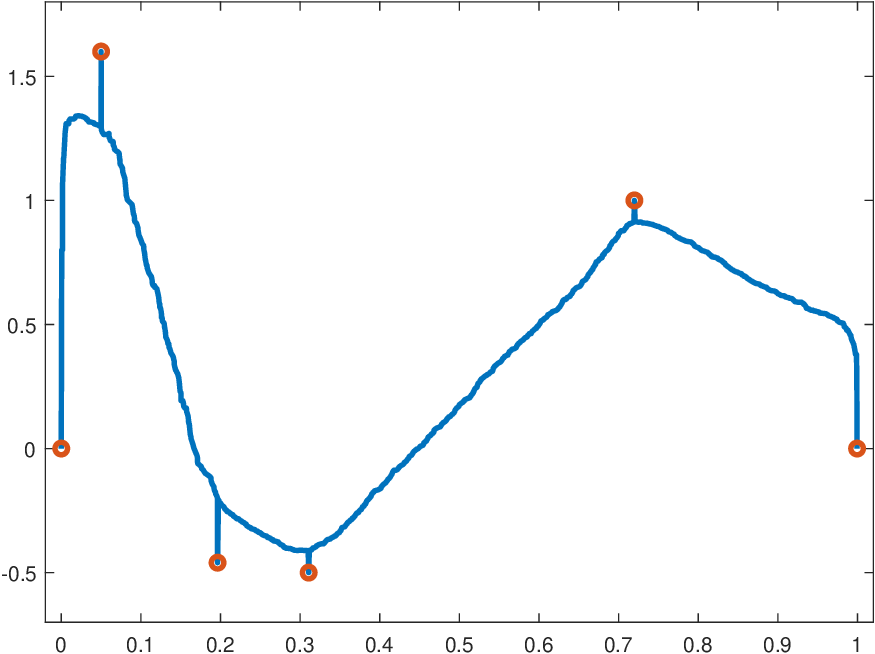}\\
\centerline{\footnotesize\emph{${k_n}=72$}}
\end{minipage}
\caption{Results of graph Laplacian and hypergraph Laplacian for different ${k_n}$. From top to bottom: $F_{G}^{con}$, $F_{CE}^{con}$, $F_{H}^{con}$, equation \eqref{AE}.}
\label{fig:1Dk}
\end{figure}

\begin{figure}[tbp]
    \centering
\begin{minipage}[t]{0.32\linewidth}
\centering
\includegraphics[width=1\textwidth]{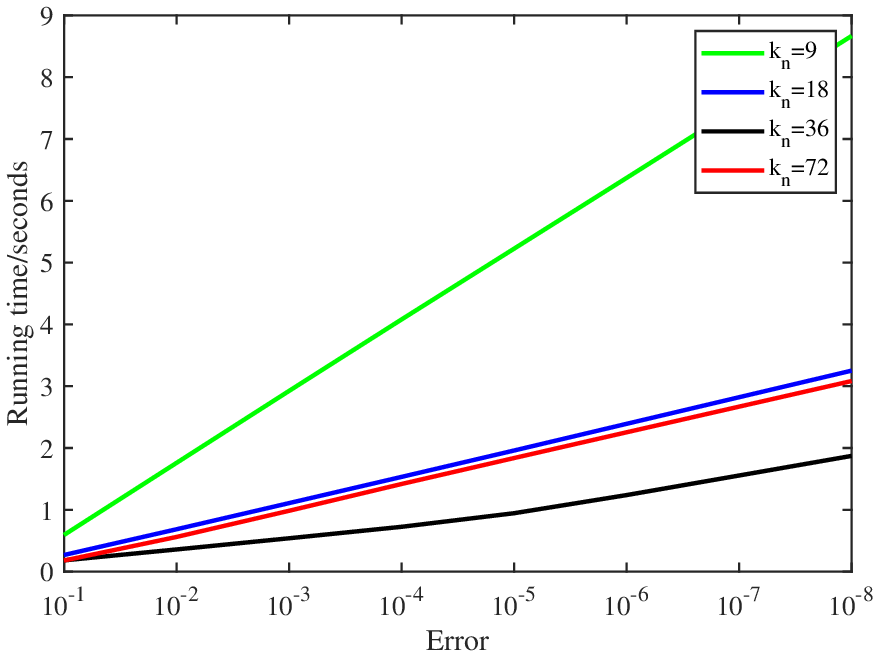}\\
\centerline{\footnotesize\emph{$F_{CE}^{con}$}}
\end{minipage}
\begin{minipage}[t]{0.32\linewidth}
\centering
\includegraphics[width=1\textwidth]{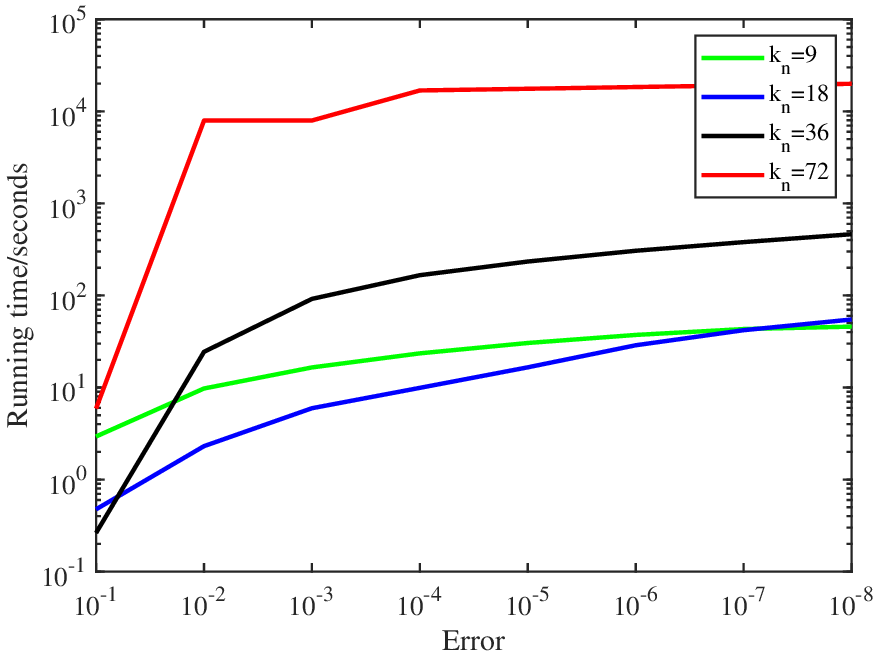}\\
\centerline{\footnotesize\emph{$F_{H}^{con}$}}
\end{minipage}
\begin{minipage}[t]{0.32\linewidth}
\centering
\includegraphics[width=1\textwidth]{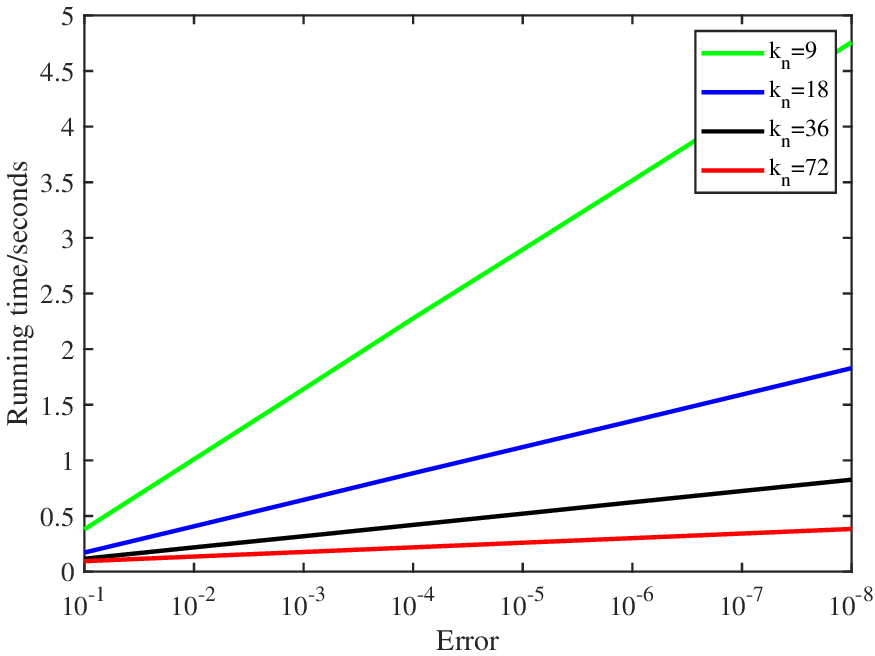}\\
\centerline{\footnotesize\emph{Equation \eqref{AE}}}
\end{minipage}
\caption{The running time of different methods with respect to the relative $\ell_2$ error.}
 \label{fig:time_1D}
\end{figure}

\subsection{Semi-supervised learning}
In the rest of this section, we consider the performance of $F_{CE}^{con}$, $F_{H}^{con}$, and equation \eqref{AE} for semi-supervised learning on some real-world datasets summarized in Table \ref{tab:dataset}.
Both Mushroom and Covertype come from the UCI repository.
Covertype(4,5) and Covertype(6,7) are derived from Covertype by selecting two classes, (4,5) and (6,7) respectively.
They are provided by the first author of \cite{hein2013total}.
All three datasets contain only categorical features. We construct a hypergraph $H=(V,E,W)$ from a given dataset as follows.
For each feature and each category of the feature, we construct a hyperedge $e\in E$ by joining all vertices that belong to the same feature and category.
Duplicate hyperedges are removed.
Cora, Pubmed, and DBLP \cite{sen2008collective} are hypergraph datasets, in which vertices are documents and hyperedges follow from co-citation or co-authorship.
Isolated vertices are removed from them.
The weight $w=1$ is used for all experiments.

Algorithm \ref{alg1} provides the details of semi-supervised learning on hypergraphs with $F_{CE}^{con}$, $F_{H}^{con}$, and equation \eqref{AE}.

\begin{table}
  \centering
  \caption{Datasets for semi-supervised learning.}
    \begin{tabular}{lllllll}
    \toprule
    Dataset & Mushroom & Covertype & Covertype & Cora & Pubmed & DBLP\\
     & & (4,5) & (6,7) & citation & citation & authorship \\
    \midrule
    $|V|$ & 8124 & 12240 & 37877 & 2708 & 3312 & 43413 \\
    $|E|$ & 110 & 104 & 123 & 1579 & 1079 & 22535 \\
    \#labels & 2 & 2 & 2 & 7 & 3 & 6 \\
    \bottomrule
    \end{tabular}%
  \label{tab:dataset}%
\end{table}%

\begin{algorithm}
  \caption{Semi-supervised learning with hypergraph $p$-Laplacian.}
  \label{alg1}
  \begin{algorithmic}
  \REQUIRE A dataset represented by a hypergraph $H=(V,E,W)$, a
  training set $\{(x_i,y_i): x_i\in L\subset V, y_i\in\{1,2,\cdots l\}\}$.
  \FOR { $s=1:l$}
  \STATE Find the constraint: For any $x_i\in L$, let
  \begin{align*}
    u_s(x_i)=
    \begin{cases}
      1, & \mbox{if } y_i=s, \\
      -1, & \mbox{otherwise}.
    \end{cases}
  \end{align*}
  \STATE Interpolate $u_s$ on $V\backslash L$ by minimizing $F_{CE}^{con}$, $F_{H}^{con}$, or solving equation \eqref{AE}.
  \ENDFOR
  \STATE For any $x_i\in V\backslash L$, let
  \begin{equation*}
    y_i=\arg\max_{1\leq s\leq l}u_s(x_i),\quad u(x_i)=y_i.
  \end{equation*}
  \RETURN $u$.
  \end{algorithmic}
\end{algorithm}

The stochastic PDHG algorithm is not suitable for $F_{H}^{con}$ when hyperedges have large cardinality (see Figure \ref{fig:time_1D}). We adopt the subgradient descent algorithm proposed in \cite{zhang2017re} for minimizing $F_{H}^{con}$.
The step size is empirically chosen as
\begin{equation}\label{eq:4.1}
  \tau(k)=\frac{1}{(k+1)^{\min(0.16k/10^5,1)}}
\end{equation}
to speed up the algorithm, as suggested by the authors.
In this setting, it is not easy to find a robust stopping criterion for the algorithm since it is not convergent in general.
This is also the reason that we do not utilize it in the previous subsection.
Instead, we manually select the iteration number within $[0,10,000]$ that achieves the minimum energy for each dataset and each labeling rate.
While $F_{CE}^{con}$ and our equation \eqref{AE} are easy to implement. A simple stopping criterion like
$\max |u^{k-1}- u^{k}|\leq \varepsilon$
is feasible.
We find that $\varepsilon=10^{-3}$ for $F_{CE}^{con}$ and $\varepsilon=10^{-2}$ for equation \eqref{AE} give the expected classification result.

\begin{figure}
    \centering
\begin{minipage}[t]{0.32\linewidth}
\centering
\includegraphics[width=1\textwidth]{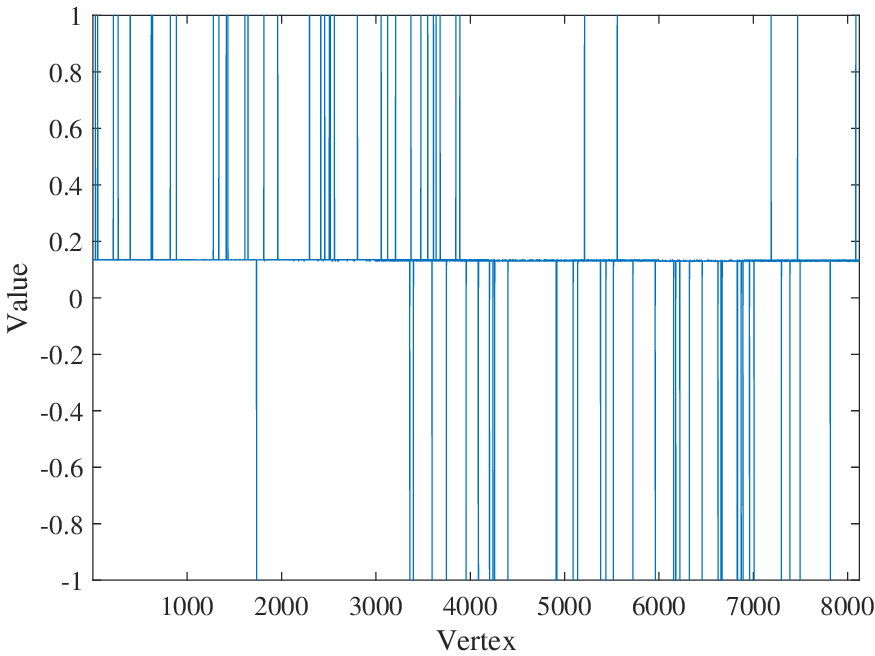}\\
\includegraphics[width=1\textwidth]{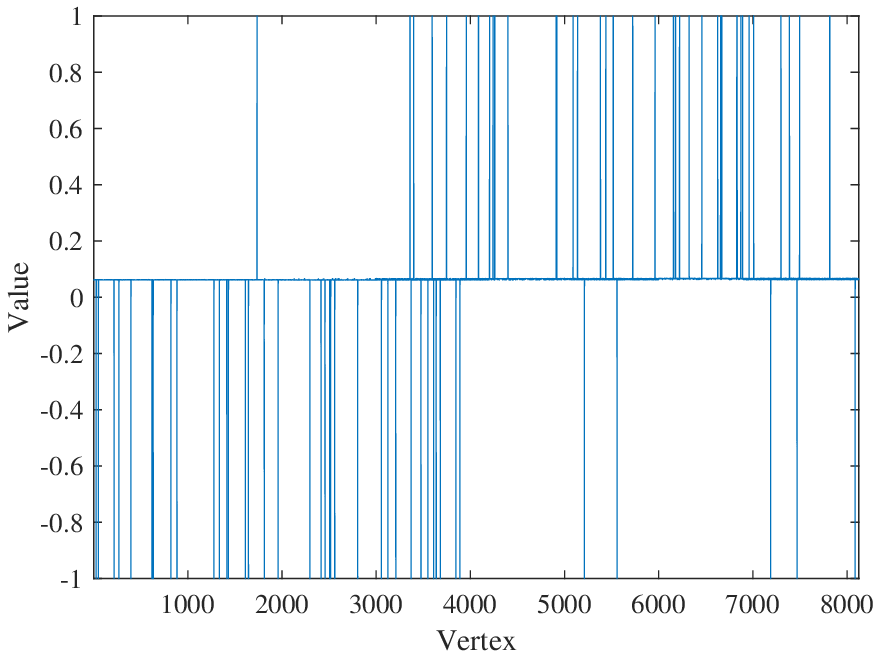}\\
\centerline{\footnotesize\emph{$F_{CE}^{con}$}}
\end{minipage}
\begin{minipage}[t]{0.32\linewidth}
\centering
\includegraphics[width=1\textwidth]{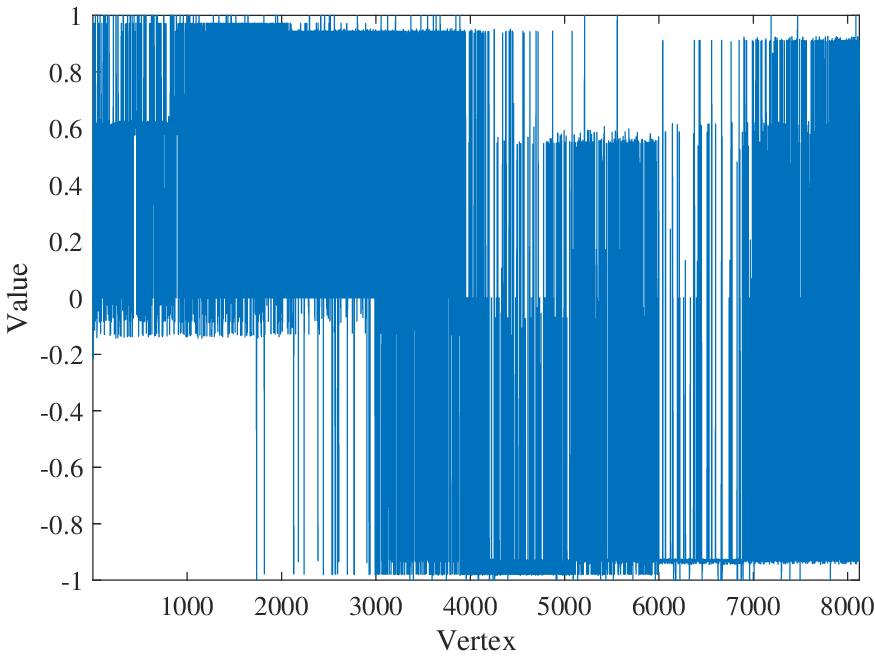}\\
\includegraphics[width=1\textwidth]{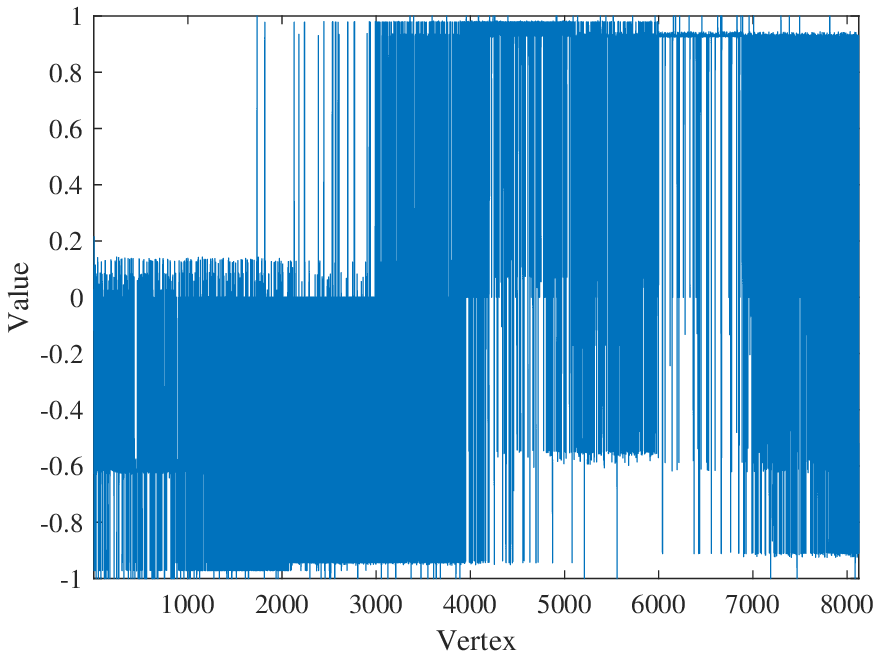}\\
\centerline{\footnotesize\emph{$F_{H}^{con}$}}
\end{minipage}
\begin{minipage}[t]{0.32\linewidth}
\centering
\includegraphics[width=1\textwidth]{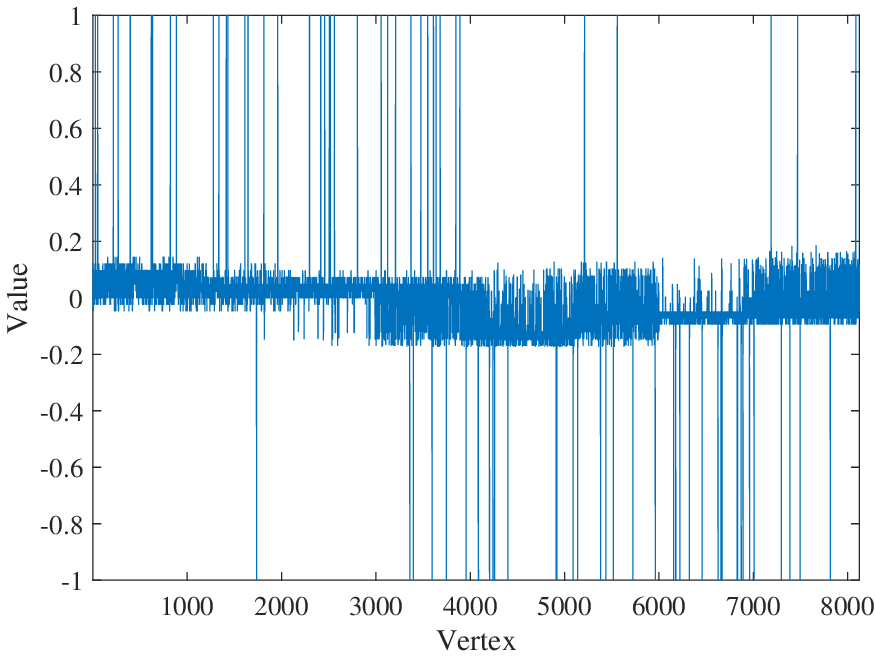}\\
\includegraphics[width=1\textwidth]{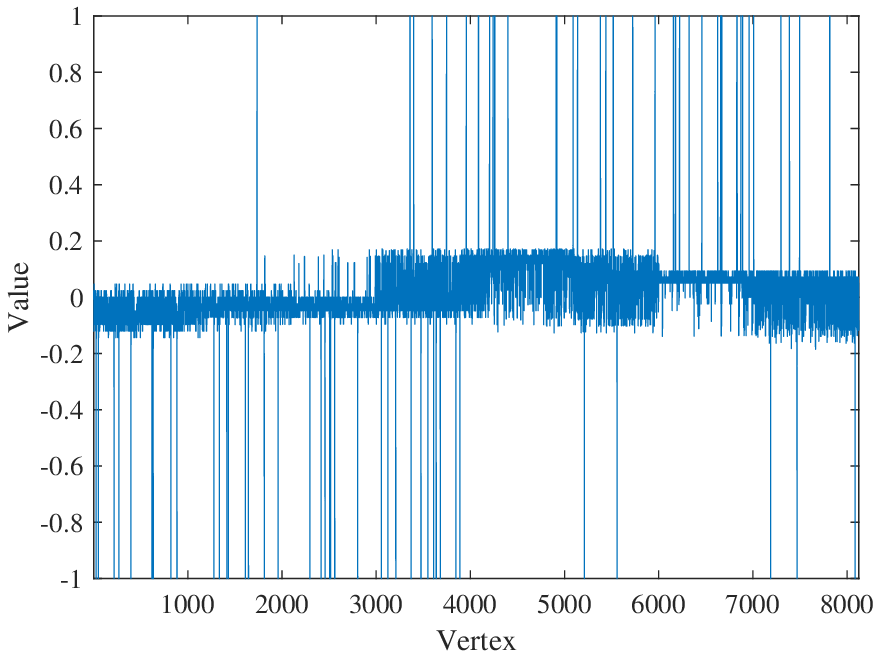}\\
\centerline{\footnotesize\emph{Equation \eqref{AE}}}
\end{minipage}
\caption{Solutions of different algorithms for the dataset Mushroom with 1\% labeling rate. The first row and second row are the solution $u_1$ and $u_2$ in Algorithm \ref{alg1} respectively. Vertices with values $\pm 1$ are the labeled points.}
 \label{fig:solution}
\end{figure}

\begin{table}[htbp]
  \setlength\tabcolsep{4.5pt}
  \centering
  \caption{Classification error and standard deviation of $F_{CE}^{con}$, $F_{H}^{con}$, and equation \eqref{AE}.}
    \begin{tabular}{lllllll}
    \toprule
    Dataset & Mushr. & Covert45 & Covert67 & Cora  & Pubmed & DBLP \\\midrule\midrule
    Labeling rate & 1\%   & 0.64\% & 0.2\% & 10\%  & 10\%  & 10\% \\\midrule
    $F_{CE}^{con}$ & 49.2$\pm$1.9 & 22.3$\pm$0.0 & 46.5$\pm$2.3 & 22.0$\pm$1.6 & 21.3$\pm$1.1 & 19.7$\pm$0.3 \\
    $F_{H}^{con}$ & 4.7$\pm$2.6 & 7.9$\pm$1.6 & 6.0$\pm$7.2 & 22.7$\pm$1.4 & 18.0$\pm$0.5 & 21.0$\pm$0.3 \\
    Equation (13) & 4.2$\pm$1.9 & 0.5$\pm$0.9 & 0.7$\pm$0.3 & 22.1$\pm$1.3 & 17.0$\pm$0.5 & 19.9$\pm$0.4 \\\midrule\midrule
    Labeling rate & 0.5\%   & 0.32\% & 0.1\% & 5\%   & 5\%   & 5\% \\\midrule
    $F_{CE}^{con}$ & 49.4$\pm$1.9 & 22.4$\pm$0.0 & 40.7$\pm$15.4 & 31.2$\pm$6.0 & 27.6$\pm$1.8 & 24.0$\pm$0.3 \\
    $F_{H}^{con}$ & 8.2$\pm$3.7 & 14.9$\pm$2.1 & 22.0$\pm$12.1 & 32.8$\pm$6.1 & 22.6$\pm$1.3 & 25.1$\pm$0.3 \\
    Equation (13) & 6.0$\pm$2.6 & 3.5$\pm$3.1 & 1.8$\pm$1.0 & 29.0$\pm$4.4 & 19.5$\pm$0.7 & 23.6$\pm$0.4 \\\midrule\midrule
    Labeling rate & 0.25\%   & 0.16\% & 0.05\% & 2.5\% & 2.5\% & 2.5\% \\\midrule
    $F_{CE}^{con}$ & 49.5$\pm$1.9 & 22.4$\pm$0.0 & 49.1$\pm$4.3 & 51.1$\pm$7.3 & 32.1$\pm$2.2 & 29.5$\pm$1.0 \\
    $F_{H}^{con}$ & 19.4$\pm$8.6 & 18.8$\pm$2.1 & 38.5$\pm$9.4 & 51.9$\pm$4.3 & 31.5$\pm$7.1 & 30.0$\pm$0.8 \\
    Equation (13) & 8.4$\pm$2.8 & 6.5$\pm$5.7 & 9.2$\pm$6.9 & 41.2$\pm$3.8 & 23.7$\pm$2.2 & 27.0$\pm$0.3 \\
    \bottomrule
    \end{tabular}%
  \label{tab:SSL}%
\end{table}%

To compare the classification accuracy of different algorithms, we randomly select some points from the dataset as the training set and repeat the experiment 10 times.
The classification error and the standard deviation are summarized in Table \ref{tab:SSL}.
For datasets Mushroom and Covertype, the minimizer of $F_{CE}^{con}$ is approximately a constant on the unlabeled points, resulting in poor classification accuracy (see the first column of Figure \ref{fig:solution}).
This is the spiking phenomenon we observed in Figure \ref{fig:1Dk}. The hypergraph algorithms $F_{H}^{con}$ and equation \eqref{AE} avoid this problem (see the second and third columns of Figure \ref{fig:solution}).
It can also be seen from Figure \ref{fig:solution} that $F_{H}^{con}$ suppresses spiky solutions better than equation \eqref{AE}, coinciding with the result of Figure \ref{fig:1Dk}.

By comparing the results of $F_{CE}^{con}$ and equation \eqref{AE} on Datasets Cora, Pubmed, and DBLP, we find that the latter has better classification performance in most of the cases. However, as the labeling rate increases, the advantage of the latter decreases or even disappears (see the datasets Cora and DBLP with a labeling rate of 10\%). This suggests that the hypergraph algorithm is more suitable for problems with low labeling rates.
By comparing the results of $F_{H}^{con}$ and equation \eqref{AE}, we are surprised to find that
equation \eqref{AE} achieves better classification accuracy, even though it comes from an approximation of $F_{H}^{con}$  and  cannot suppress spiky solutions as well as $F_{H}^{con}$.

The main contribution of equation \eqref{AE} lies in the computational efficiency and the stability.
The average running times of $F_{CE}^{con}$, $F_{H}^{con}$, and equation \eqref{AE} on all experiments are about 23.7, 947, and 3.9 seconds, respectively.
The difference between the results of $F_{CE}^{con}$ and equation \eqref{AE} comes from the iteration number, i.e., the stopping criterion.
$F_{H}^{con}$ requires a large computational cost, since each experiment requires 10,000 iterations and the calculation of the corresponding energy.
The numerical scheme \eqref{eq:2.4:2} for equation \eqref{AE} is convergent and does not involve any parameters, thus avoiding this problem.

Let us consider separately the largest dataset DBLP, which is also the only dataset in Table \ref{tab:dataset} that the subgradient descent algorithm for $F_{H}^{con}$ converges.
Figure \ref{fig:energy_DBLP} shows the energy of $F_{H}^{con}$ with respect to the iteration number, from which we see that the iteration number $(700, 1,100, 1,500)$ works for the labeling rate $(10\%, 5\%, 2.5\%)$.
Table \ref{tab:time_SSL} shows the average running time of the three algorithms over 10 runs.
For this particular dataset, the running time is also greatly reduced by equation \eqref{AE}.

\begin{figure}[tbp]
    \centering
\begin{minipage}[t]{0.32\linewidth}
\centering
\includegraphics[width=1\textwidth]{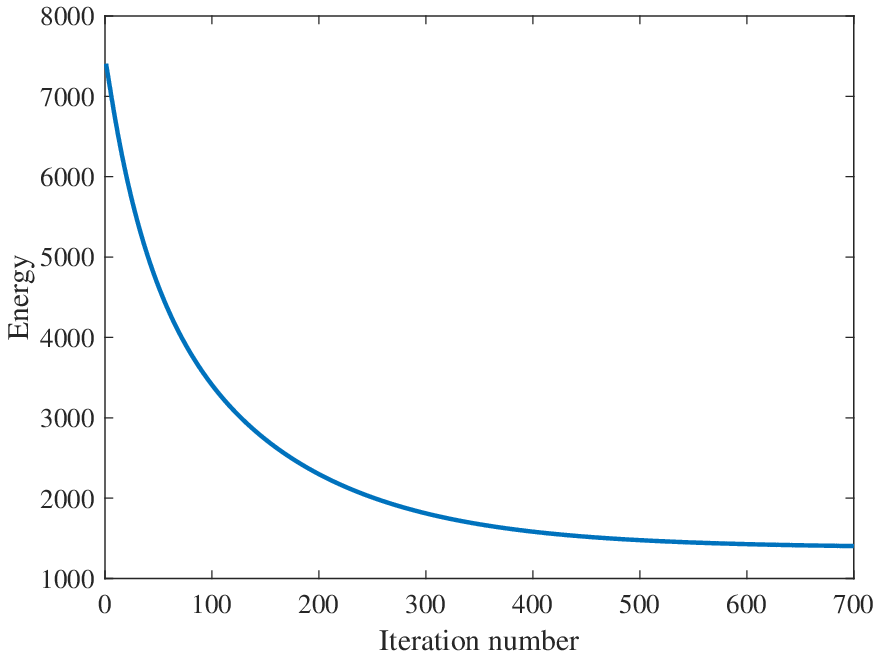}\\
\centerline{\footnotesize\emph{Labeling rate: 10\%}}
\end{minipage}
\begin{minipage}[t]{0.32\linewidth}
\centering
\includegraphics[width=1\textwidth]{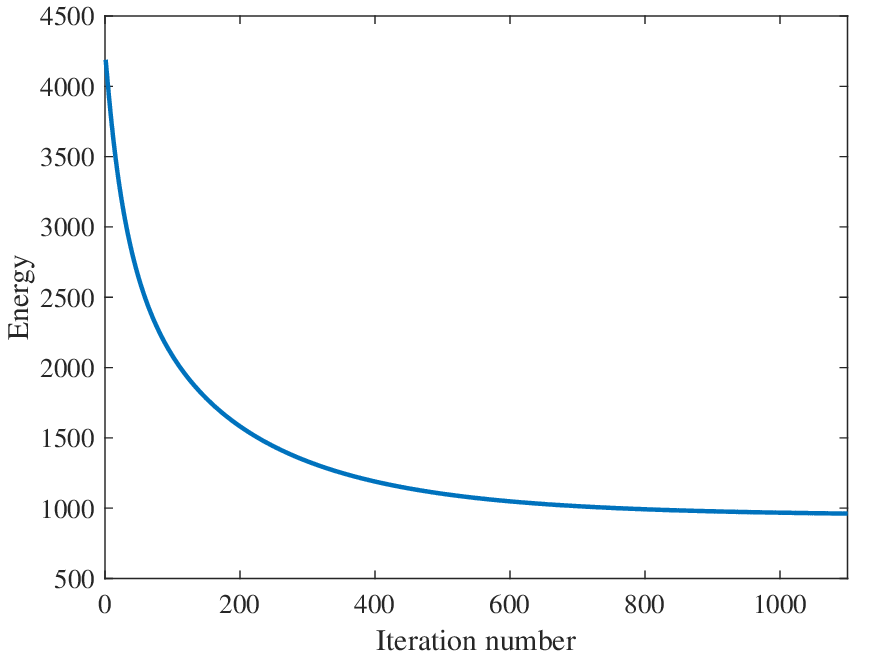}\\
\centerline{\footnotesize\emph{Labeling rate: 5\%}}
\end{minipage}
\begin{minipage}[t]{0.32\linewidth}
\centering
\includegraphics[width=1\textwidth]{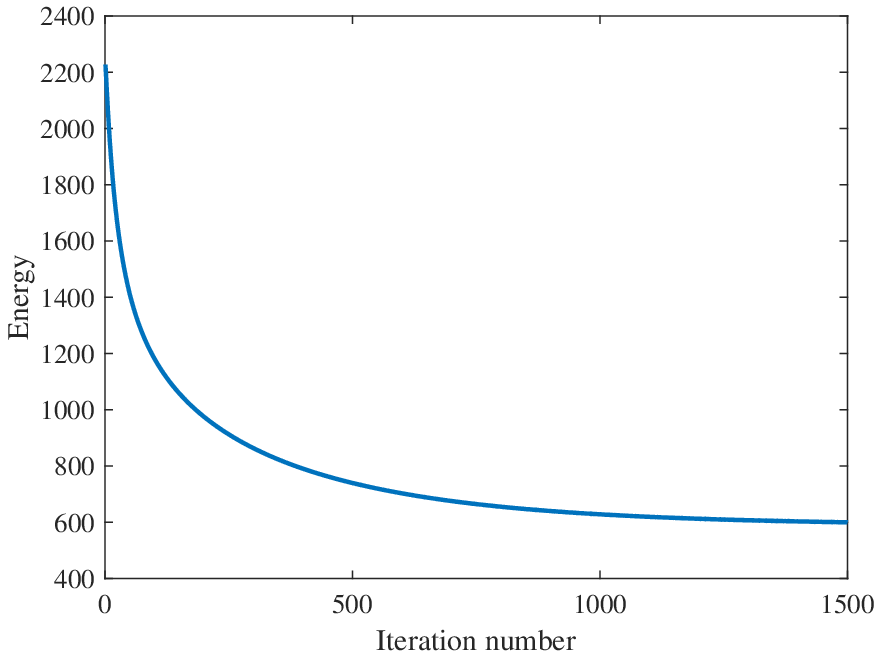}\\
\centerline{\footnotesize\emph{Labeling rate: 2.5\%}}
\end{minipage}
\caption{The energy of $F_{H}^{con}$ for the dataset DBLP.}
 \label{fig:energy_DBLP}
\end{figure}

\begin{table}[tbp]
  \centering
  \caption{The average running time (seconds) of $F_{CE}^{con}$, $F_{H}^{con}$, and equation \eqref{AE} for the dataset DBLP.}
    \begin{tabular}{llll}
    \toprule
    Labeling rate & 10\%  & 5\% & 2.5\% \\
    \midrule
    $F_{CE}^{con}$ & 262.4     & 429.9     & 584.1 \\
    $F_{H}^{con}$ & 116.1     & 124.9     & 137.3 \\
    Equation (13) & 19.1     & 20.2     & 20.6 \\
    \bottomrule
    \end{tabular}%
  \label{tab:time_SSL}%
\end{table}%

\section{Conclusion}
In this paper, a new hypergraph $p$-Laplacian equation, deduced from an approximation of the hypergraph $p$-Laplacian regularization, has been proposed for semi-supervised learning.
The unique solvability and the comparison principle of the equation have been established.
Numerical experiments have confirmed the effectiveness of the new equation. It not only suppresses spiky solutions and improves the classification accuracy, but also significantly reduces the computation time.

\section*{Acknowledgements}
The authors would like to thank the referees for the valuable comments and suggestions.
KS is supported by China Scholarship Council.
The authors acknowledge  support from DESY (Hamburg, Germany), a member of the Helmholtz Association HGF.

\section*{Declarations}
\textbf{Data Availability} Data will be made available on request.

\textbf{Conflict of interest} The authors have no relevant financial or non-financial interests to disclose.










%
%

\bibliographystyle{unsrt}
\bibliography{references}

\begin{thebibliography}{10}

\bibitem{zhang2020hypergraph}
Songyang Zhang, Shuguang Cui, and Zhi Ding.
\newblock Hypergraph-based image processing.
\newblock In {\em 2020 IEEE International Conference on Image Processing
  (ICIP)}, pages 216--220. IEEE, 2020.

\bibitem{shi2024continuum}
Kehan Shi and Martin Burger.
\newblock Continuum limit of $ p $-biharmonic equations on graphs.
\newblock {\em arXiv preprint arXiv:2404.19689}, 2024.

\bibitem{klamt2009hypergraphs}
Steffen Klamt, Utz-Uwe Haus, and Fabian Theis.
\newblock Hypergraphs and cellular networks.
\newblock {\em PLoS computational biology}, 5(5):e1000385, 2009.

\bibitem{patro2013predicting}
Rob Patro and Carl Kingsford.
\newblock Predicting protein interactions via parsimonious network history
  inference.
\newblock {\em Bioinformatics}, 29(13):i237--i246, 2013.

\bibitem{antelmi2021social}
Alessia Antelmi, Gennaro Cordasco, Carmine Spagnuolo, and Przemys{\l}aw Szufel.
\newblock Social influence maximization in hypergraphs.
\newblock {\em Entropy}, 23(7):796, 2021.

\bibitem{fazeny2023hypergraph}
Ariane Fazeny, Daniel Tenbrinck, and Martin Burger.
\newblock Hypergraph p-laplacians, scale spaces, and information flow in
  networks.
\newblock In {\em International Conference on Scale Space and Variational
  Methods in Computer Vision}, pages 677--690. Springer, 2023.

\bibitem{zhou2006learning}
Dengyong Zhou, Jiayuan Huang, and Bernhard Sch{\"o}lkopf.
\newblock Learning with hypergraphs: Clustering, classification, and embedding.
\newblock {\em Advances in neural information processing systems}, 19, 2006.

\bibitem{agarwal2006higher}
Sameer Agarwal, Kristin Branson, and Serge Belongie.
\newblock Higher order learning with graphs.
\newblock In {\em Proceedings of the 23rd international conference on Machine
  learning}, pages 17--24, 2006.

\bibitem{el2016asymptotic}
Ahmed El~Alaoui, Xiang Cheng, Aaditya Ramdas, Martin~J Wainwright, and
  Michael~I Jordan.
\newblock Asymptotic behavior of$\ell_p$-based laplacian
  regularization in semi-supervised learning.
\newblock In {\em Conference on Learning Theory}, pages 879--906. PMLR, 2016.

\bibitem{hein2013total}
Matthias Hein, Simon Setzer, Leonardo Jost, and Syama~Sundar Rangapuram.
\newblock The total variation on hypergraphs-learning on hypergraphs revisited.
\newblock {\em Advances in Neural Information Processing Systems}, 26, 2013.

\bibitem{ikeda2023nonlinear}
Masahiro Ikeda and Shun Uchida.
\newblock Nonlinear evolution equation associated with hypergraph laplacian.
\newblock {\em Mathematical Methods in the Applied Sciences}, 46(8):9463--9476,
  2023.

\bibitem{fukao2022heat}
Takeshi Fukao, Masahiro Ikeda, and Shun Uchida.
\newblock Heat equation on the hypergraph containing vertices with given data.
\newblock {\em arXiv preprint arXiv:2212.05446}, 2022.

\bibitem{shi2024hypergraph}
Kehan Shi and Martin Burger.
\newblock Hypergraph $ p $-laplacian regularization on point clouds for data
  interpolation.
\newblock {\em arXiv preprint arXiv:2405.01109}, 2024.

\bibitem{slepcev2019analysis}
Dejan Slepcev and Matthew Thorpe.
\newblock Analysis of p-laplacian regularization in semisupervised learning.
\newblock {\em SIAM Journal on Mathematical Analysis}, 51(3):2085--2120, 2019.

\bibitem{chambolle2011first}
Antonin Chambolle and Thomas Pock.
\newblock A first-order primal-dual algorithm for convex problems with
  applications to imaging.
\newblock {\em Journal of mathematical imaging and vision}, 40:120--145, 2011.

\bibitem{chambolle2018stochastic}
Antonin Chambolle, Matthias~J Ehrhardt, Peter Richt{\'a}rik, and Carola-Bibiane
  Schonlieb.
\newblock Stochastic primal-dual hybrid gradient algorithm with arbitrary
  sampling and imaging applications.
\newblock {\em SIAM Journal on Optimization}, 28(4):2783--2808, 2018.

\bibitem{zhang2017re}
Chenzi Zhang, Shuguang Hu, Zhihao~Gavin Tang, and TH~Hubert Chan.
\newblock Re-revisiting learning on hypergraphs: confidence interval and
  subgradient method.
\newblock In {\em International Conference on Machine Learning}, pages
  4026--4034. PMLR, 2017.

\bibitem{shor2012minimization}
Naum~Zuselevich Shor.
\newblock {\em Minimization methods for non-differentiable functions},
  volume~3.
\newblock Springer Science \& Business Media, 2012.

\bibitem{flores2022analysis}
Mauricio Flores, Jeff Calder, and Gilad Lerman.
\newblock Analysis and algorithms for $\ell$p-based semi-supervised learning on
  graphs.
\newblock {\em Applied and Computational Harmonic Analysis}, 60:77--122, 2022.

\bibitem{sen2008collective}
Prithviraj Sen, Galileo Namata, Mustafa Bilgic, Lise Getoor, Brian Galligher,
  and Tina Eliassi-Rad.
\newblock Collective classification in network data.
\newblock {\em AI magazine}, 29(3):93--93, 2008.

\end{thebibliography}

\end{document}